\newtheorem{theorem}{Theorem}[section]
\newtheorem{lemma}[theorem]{Lemma}
\newtheorem{cor}[theorem]{Corollary} % added
\newtheorem{prop}[theorem]{Proposition} % added
\theoremstyle{definition}
\newtheorem{defi}[theorem]{Definition}
\theoremstyle{remark}
\newtheorem{example}[theorem]{Example} % added
\newtheorem{remark}[theorem]{Remark}
\numberwithin{equation}{section}
\newcommand{\ub}[1]{^{(#1)}}
\newcommand{\ind}[1]{\mathbbm{1}_{#1}} % indicator of a set
\newcommand{\Ind}[1]{\mathbbm{1}_{\{#1\}}} % indicator with curly brackets
\newcommand{\E}{\mathbb{E}} % expected value
\newcommand{\p}{\mathbb{P}} % probability
\newcommand{\R}{\mathbb{R}} % real numbers
\newcommand{\N}{\mathbb{N}}
\newcommand{\X}{\mathbb{X}}
\newcommand{\Y}{\mathbb{Y}}
\newcommand{\cX}{\mathcal{X}}
\newcommand{\cF}{\mathcal{F}}
\newcommand{\un}{^{(n)}}
\DeclareMathOperator{\Med}{Med}	% median
\DeclareMathOperator{\Var}{Var}	% variance
\author{Rados{\l}aw Adamczak} %
\address[RA]{Institute of Mathematics, University of Warsaw } %
\email{R.Adamczak@mimuw.edu.pl}
\thanks{Research partially supported by the National Science Centre, Poland, grant
no. 2015/18/E/ST1/00214}
\subjclass[2010]{60F99, 60H05, 60B11}
\keywords{multiple stochastic integrals, polynomial chaos, random multi-linear forms, Poisson process}
\title{On almost sure convergence of random variables with finite chaos decomposition}
\begin{document}

\begin{abstract}
Under mild conditions on a family of independent random variables $(X_n)$ we prove that almost sure convergence of a sequence of tetrahedral polynomial chaoses of uniformly bounded degrees in the variables $(X_n)$ implies the almost sure convergence of their homogeneous parts. This generalizes a recent result due to Poly and Zheng obtained under stronger integrability conditions. In particular for i.i.d. sequences we provide a simple necessary and sufficient condition for this property to hold.

We also discuss similar phenomena for sums of multiple stochastic integrals with respect to Poisson processes, answering a question by Poly and Zheng.
\end{abstract}

\maketitle

\section{Introduction}\label{sec:introduction}
Investigation of real and vector valued multi-linear forms in independent random variables is a classical topic in probability theory closely related to multiple stochastic integration. Such random variables have been thoroughly studied, e.g., in the context of harmonic analysis on the discrete cube, analysis of Boolean functions, geometric theory of Banach spaces, random graphs, concentration of measure, Malliavin calculus or more recently the Malliavin-Stein method. We refer the Reader to the monographs \cite{MR1167198,MR1666908,MR1102015, MR1681553,MR1474726,MR2962301,MR3752640,MR3443800} for extensive exposition of various aspects of the theory.

Recently Poly and Zheng \cite{PolyZheng} have observed that for a large class of sequences $\X = (X_n)_{n\in \N}$ of independent random variables the almost sure convergence of a sequence of sums of tetrahedral (i.e., affine in each variable) multi-linear forms of bounded degrees in the sequence $\X$ can be decomposed into the almost sure convergence of their homogeneous parts. They also proved a counterpart of this result for sums of multiple stochastic integrals with respect to a Gaussian process and posed certain questions concerning similar phenomena for sequences of variables with less regularity than those covered by their theorems, as well as for sums of multiple stochastic integrals with respect to a Poisson process.

The goal of this article is to provide answers to the questions raised by Poly and Zheng and to further study the almost sure convergence of sums of tetrahedral multi-linear forms, also in the vector valued setting. In order to formulate our results in a precise way and to put them in the right perspective let us start with the formulation of the main theorems by Poly and Zheng.

\subsection{Results by Poly and Zheng}

Denote by $\ell_0(\N)^{\odot d}$ the set of all $d$-tensors ($d$-indexed matrices) of the form
$a = (a_{i_1,\ldots,i_d})_{i_1,\ldots,i_d=0}^\infty$, symmetric in their arguments (i.e., $a_{i_1,\ldots,i_d} = a_{i_{\sigma(1)},\ldots,i_{\sigma(d)}}$ for any permutation $\sigma$ of the set $[d] = \{1,\ldots,d\}$), with vanishing diagonals (i.e., such that $a_{i_1,\ldots,i_d} = 0$ whenever $i_k = i_l$ for some $k\neq l$). For $d=0$ we interpret $a \in \ell(\N)^{\otimes d}$ as a single real number $a_\emptyset$ (corresponding to the empty multi-index).

Let $X_0,X_1,X_2,\ldots$ be a family of independent random variables. Assume that $\E X_i = 0$, $\E X_i^2 = 1$ and for some $\delta > 0$, $\sup_i \E |X_{i}|^{2+\delta} < \infty$.

Assume now that $(Z_n)_{1\le n\le \infty}$ is a sequence of random variables of the form\footnote{Note that we include here $n=\infty$}
\begin{displaymath}
  Z_n = \sum_{k=0}^d Z_{n,k},
\end{displaymath}
where
\begin{displaymath}
Z_{n,k} = \sum_{i_1,\ldots,i_k=0}^\infty a^{(n,k)}_{i_1,\ldots,i_k} X_{i_1}\cdots X_{i_k}
\end{displaymath}
for some $a^{(n,k)} \in \ell_0(\N)^{\odot k}$ such that $\sum_{i_1,\ldots,i_k = 1}^\infty |a^{(n,k)}_{i_1,\ldots,i_k}|^2 < \infty$. Here the infinite sums defining $Z_{n,k}$ are understood as a.s. (or $L_2$) limits of sums over $i_1,\ldots,i_k \in \{0,\ldots,n\}$ (their existence follows easily from the martingale convergence theorem). Note that $Z_{n,0}$ are just constants (products over empty index set are interpreted as one).

\medskip

One of the results proved by Poly and Zheng is
\begin{theorem}[Theorem 1.3. in \cite{PolyZheng}]\label{thm:Poly-Zheng-independent}
In the above setting, if $Z_{n}$ converges to $Z_\infty$ a.s. as $n \to \infty$, then for all $k \le d$, $Z_{n,k} \to Z_{\infty,k}$ a.s.
\end{theorem}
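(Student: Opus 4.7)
The plan is to reduce the problem to $L^2$ convergence via moment-comparison inequalities for polynomial chaoses of bounded degree, exploit the $L^2$-orthogonality between chaoses of different degrees, and then lift the $L^2$ convergence of each homogeneous part back to almost sure convergence by induction on the degree.

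First I would upgrade almost sure convergence of $Z_n$ to $L^2$ convergence. Since $(Z_n)$ converges a.s.\ it is tight, and the moment hypothesis $\sup_i\E|X_i|^{2+\delta}<\infty$ combined with a hypercontractive moment comparison of Krakowiak--Szulga type for tetrahedral chaoses of degree at most $d$ gives $\|Z\|_{2+\delta'}\le C_d\|Z\|_2$ for some $\delta'>0$. Together with a Paley--Zygmund lower bound on chaos tails, tightness then forces $\sup_n \|Z_n\|_{2+\delta'}<\infty$, and uniform integrability of $|Z_n|^2$ upgrades a.s.\ convergence to $L^2$ convergence. Next, the assumptions $\E X_i=0$ and independence make chaoses of different degrees $L^2$-orthogonal: within each tetrahedral monomial indices are distinct, so a nonvanishing expectation $\E[X_{i_1}\cdots X_{i_k} X_{j_1}\cdots X_{j_l}]$ forces $\{i_\bullet\}$ and $\{j_\bullet\}$ to coincide, and in particular $k=l$. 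The decomposition $Z_n=\sum_k Z_{n,k}$ is therefore Pythagorean, so $L^2$ convergence of $Z_n$ is equivalent to the simultaneous $L^2$ convergence $Z_{n,k}\to Z_{\infty,k}$ for each $k$.

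The remaining task is to lift $L^2$ convergence of each $Z_{n,k}$ to a.s.\ convergence, which I would attempt by induction on $d$. The scalar part $Z_{n,0}$ is deterministic, so its $L^2$ convergence is numerical convergence; after subtracting it, one may assume $Z_{n,0}\equiv 0$. For the inductive step it suffices to isolate the top-degree part, since once $Z_{n,d}\to Z_{\infty,d}$ a.s., the difference $Z_n-Z_{n,d}=\sum_{k<d} Z_{n,k}$ is a.s.\ convergent and the inductive hypothesis applies to this sum of lower-degree chaoses. To extract $Z_{n,d}$ I would introduce an independent copy $(X_i')$ of $(X_i)$ and form $Y_i(t)=e^{-t}X_i+\sqrt{1-e^{-2t}}\,X_i'$; the tetrahedral structure then yields
\[
\E\bigl[Z_n(Y(t))\,\bigm|\,X\bigr]=\sum_{k=0}^{d} e^{-kt}\,Z_{n,k}(X),
\]
a non-Gaussian analogue of the Ornstein--Uhlenbeck multiplier. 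Choosing $d+1$ distinct values of $t$ and inverting the resulting Vandermonde system expresses each $Z_{n,k}(X)$ as an explicit linear combination of these conditional expectations.

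The crux, then, is to show that each $\E[Z_n(Y(t))\mid X]$ converges a.s.\ as $n\to\infty$; the $L^2$ convergence of these conditional expectations follows directly from the first step applied on the enlarged probability space, but only recovers the $L^2$ convergence of the $Z_{n,k}$ that we already had. The main obstacle is therefore the a.s.\ upgrade: there is no martingale structure in $n$ because the tensors $a^{(n,k)}$ may depend on $n$ arbitrarily, so Doob's theorem is unavailable, and the naive extraction $X_i\mapsto \lambda X_i$ (which would algebraically separate the components via $Z_n(\lambda X)=\sum_k\lambda^k Z_{n,k}$) fails because deterministic scaling does not preserve the distribution of $X_i$ outside the Gaussian setting, so the a.s.\ hypothesis does not transfer to the rescaled sequence. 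Overcoming this appears to require a genuinely new maximal inequality or a Malliavin-type estimate tailored to the $(2+\delta)$-moment assumption, and this is where the heart of the argument must lie.
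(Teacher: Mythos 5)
Your first two steps are sound and essentially reproduce the opening of the Poly--Zheng argument: a Rosenthal/Krakowiak--Szulga type comparison $\|Z\|_{2+\delta'}\le C_d\|Z\|_2$ for tetrahedral chaoses of degree at most $d$, combined with Paley--Zygmund and tightness, does give $\sup_n\|Z_n\|_{2+\delta'}<\infty$ and hence $L^2$ convergence, and the orthogonality of the homogeneous parts (every index in a cross term of two tetrahedral monomials of different degrees appears exactly once somewhere, killing the expectation) then gives $L^2$ convergence of each $Z_{n,k}$. The gap is in the final step, and it is twofold. First, your interpolation $Y_i(t)=e^{-t}X_i+\sqrt{1-e^{-2t}}\,X_i'$ does not have the law of $X_i$ outside the Gaussian case, so the hypothesis ``$Z_n\to Z_\infty$ a.s.'' tells you nothing about $Z_n(Y(t))$; this is exactly the objection you raise against the scaling $X\mapsto\lambda X$, and it applies verbatim to your own construction. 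The repair is the resampling (Mehler-type) coupling: let $Y_i(t)$ equal $X_i$ with probability $e^{-t}$ and an independent copy $X_i'$ with probability $1-e^{-t}$, independently over $i$. Then $(Y_i(t))_i$ is distributed as $(X_i)_i$, so $Z_n(Y(t))\to Z_\infty(Y(t))$ a.s.\ on the enlarged space, while $\E[Y_i(t)\mid X]=e^{-t}X_i$ (using $\E X_i'=0$) and the tetrahedral structure still yield $\E[Z_n(Y(t))\mid X]=\sum_k e^{-kt}Z_{n,k}(X)$. Second, even with the correct coupling you must justify exchanging the a.s.\ limit in $n$ with the conditional expectation given $X$: by Fubini, for a.e.\ $X$ the sequence $Z_n(Y(t))$ converges a.s.\ in the resampling randomness, and what is needed is conditional uniform integrability of this sequence for a.e.\ fixed $X$. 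This is precisely where the moment hypothesis has to be used a second time (e.g., by extracting $\sup_n\E\bigl[|Z_n(Y(t))|^{1+\varepsilon}\,\big|\,X\bigr]<\infty$ a.s.\ from $\sup_n\|Z_n\|_{2+\delta'}<\infty$); it is the heart of the argument, and you have left it open, as you acknowledge. Without it, $L^2$ convergence of each $Z_{n,k}$ together with a.s.\ convergence of the sum does not yield a.s.\ convergence of the parts.

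For the record, the paper does not prove the theorem this way at all: it establishes the stronger Theorem \ref{thm:general-sequences} (and Corollary \ref{cor:uniformly-integrable}, which weakens the $(2+\delta)$-moment assumption to uniform square integrability) by decoupling. Lemma \ref{le:reduction-to-d=1} reduces the whole problem, via the de la Pe\~na--Montgomery-Smith inequalities and induction on the degree, to the scalar implication \eqref{eq:1d-implication} for affine forms, which is then verified through the weak-law characterization of Proposition \ref{prop:convergence-to-one}. That route avoids hypercontractivity and the $L^2$ theory entirely; if you want to complete your own approach, the conditional uniform integrability step above is what you must supply.
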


In other words the almost sure convergence of sums of tetrahedral multilinear forms of uniformly bounded degrees in the variables $X_i$ decomposes into almost sure convergence of their homogeneous components.

While we postpone the rigorous formulation of our results to subsequent sections, let us announce that we provide a weaker sufficient conditions for this property to hold (see Theorem \ref{thm:general-sequences}), which in particular allows to replace the finiteness of higher moments in Theorem \ref{thm:Poly-Zheng-independent} by uniform square integrability (Corollary \ref{cor:uniformly-integrable}). We also completely characterize i.i.d. sequences with the above property (Theorem \ref{thm:iid}) and extend this phenomenon to the case of multi-linear forms with coefficients from a Banach space (Proposition \ref{prop:Banach}).

Another result from \cite{PolyZheng} is a counterpart for sums of Gaussian multiple stochastic integrals. Since we are not going to use it (we state it only for comparison with the Poissonian case which we will consider in Section \ref{sec:results-Poisson}) we refer, e.g., to the monograph \cite{MR1474726} for the necessary definitions. We remark that the original formulation of the theorem involved rather isonormal Gaussian processes over a separable Hilbert space. To be able to draw analogy with the Poissonian setting, we state it in an equivalent form in terms of Gaussian stochastic measures.

\begin{theorem}[Theorem 1.1. in \cite{PolyZheng}]\label{thm:Poly-Zheng-Gaussian}
  Let  $G$ be a Gaussian stochastic measure on a measurable space $(\mathcal{X},\mathcal{F},\mu)$ and let $I_n$ denote the corresponding $n$-fold Gaussian stochastic integral on
$L_{2,s}(\mathcal{X}^n,\mu^{\otimes n})$ (the space of square integrable functions, symmetric in their arguments). Let $d \in \N$ and consider a sequence $(F_n)_{n=0}^\infty$ of random variables of the form
\begin{displaymath}
  F_n = \E F_n + \sum_{k=1}^d I_k(f_{n,k}),
\end{displaymath}
where $f_{n,k} \in L_{2,s}(\mathcal{X}^k,\mu^{\otimes k})$ and $d \in \N$. If the sequence $F_n$ converges almost surely to a random variable $F$, then $\E F_n \to \E F$ and there exist functions $f_{\infty,k} \in L_{2,s}(\mathcal{X}^k,\mu^{\otimes k})$, such that for all $k\le d$, $I_k(f_{n,k})$ converges almost surely as $n \to \infty$ to $I_k(f_{\infty,k})$.
\end{theorem}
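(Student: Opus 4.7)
My plan is to reduce the theorem to Theorem \ref{thm:Poly-Zheng-independent} via finite-dimensional discretization, using hypercontractivity of Gaussian chaos to control approximation errors.

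\emph{Step 1 (upgrade to $L^p$).} Each $F_n$ lies in the closed subspace $\bigoplus_{k=0}^d \mathcal{H}_k$ of Gaussian random variables of Wiener chaos of order at most $d$. By Nelson's hypercontractivity — equivalently Janson's theorem that on this subspace all $L^p$ topologies for $0 < p < \infty$ agree with convergence in probability — the a.s. convergence $F_n \to F$ lifts to convergence in every $L^p$ and forces $F \in \bigoplus_{k=0}^d \mathcal{H}_k$. Orthogonality of distinct chaoses then gives $\E F_n \to \E F$ and produces $f_{\infty,k} \in L_{2,s}(\mathcal{X}^k,\mu^{\otimes k})$ with $f_{n,k} \to f_{\infty,k}$ in $L^2$ and $I_k(f_{n,k}) \to I_k(f_{\infty,k})$ in every $L^p$. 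So the candidate limits exist; it remains to upgrade $L^p$ convergence of each component to a.s. convergence.

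\emph{Step 2 (reduction via discretization).} After restricting to a countably generated sub-$\sigma$-algebra of $\mathcal{F}$ containing the supports of all $f_{n,k}$, I fix an increasing sequence of finite measurable partitions $\mathcal{P}_N$ (cells of finite $\mu$-measure) generating this sub-$\sigma$-algebra, and set $\mathcal{G}_N = \sigma(G(A) : A \in \mathcal{P}_N)$. The conditional expectation
\[
\E[F_n \mid \mathcal{G}_N] = \E F_n + \sum_{k=1}^d I_k(P_{N,k} f_{n,k}),
\]
where $P_{N,k} f_{n,k}$ is the cellwise average of $f_{n,k}$ over $\mathcal{P}_N^k$ vanishing on diagonals, is a tetrahedral multi-linear form of degree $\le d$ in the i.i.d.\ standard Gaussians $\{G(A)/\sqrt{\mu(A)} : A \in \mathcal{P}_N\}$. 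On the finite-dimensional space of polynomials of bounded degree in finitely many Gaussians all topologies coincide, so Step 1 yields a.s.\ convergence $\E[F_n \mid \mathcal{G}_N] \to \E[F \mid \mathcal{G}_N]$ for each fixed $N$. Theorem \ref{thm:Poly-Zheng-independent} then applies to the discrete sum and produces, for each $k$ and $N$, $I_k(P_{N,k} f_{n,k}) \to I_k(P_{N,k} f_{\infty,k})$ a.s.\ as $n \to \infty$.

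\emph{Step 3 (passage $N \to \infty$; the main obstacle).} The decomposition
\[
I_k(f_{n,k}) - I_k(f_{\infty,k}) = \bigl[I_k(P_{N,k} f_{n,k}) - I_k(P_{N,k} f_{\infty,k})\bigr] + I_k\bigl((I - P_{N,k})(f_{n,k} - f_{\infty,k})\bigr)
\]
combined with relative compactness of $(f_{n,k})_n$ in $L^2$ and hypercontractivity gives $\sup_n \|I_k((I-P_{N,k})(f_{n,k} - f_{\infty,k}))\|_{L^p} \to 0$ as $N \to \infty$. Together with Step 2's a.s.\ convergence of the first bracket for fixed $N$ and the martingale limit $I_k(P_{N,k} f_{\infty,k}) \to I_k(f_{\infty,k})$ a.s., this should close the argument via a diagonal Borel-Cantelli along a fast-growing subsequence $N_m$, using the Borell concentration $\p(|I_k(h)| > t) \le 2\exp(-c(t/\|h\|_{L^2})^{2/k})$ for Gaussian chaos. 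The hard part will be this diagonal passage: a naive union bound over $n$ diverges, so the windows $[n_m, n_{m+1})$ on which tails are summable must be chosen judiciously — exploiting that one only needs to bound $\limsup_n$, not $\sup_n$, and letting Step 2's a.s.\ convergence for each fixed $N$ absorb the bulk of the sequence within each window.
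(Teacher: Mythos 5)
First, a remark on scope: the paper does not prove Theorem \ref{thm:Poly-Zheng-Gaussian} --- it is quoted from \cite{PolyZheng} only for comparison with the Poisson case --- so there is no in-paper proof to compare against and I am judging your proposal on its own. Your Step 1 is correct and standard (Janson's theorem that $\bigoplus_{k\le d}\mathcal{H}_k$ is closed in $L^0$ and carries a single topology), and it already yields $\E F_n\to\E F$, the existence of $f_{\infty,k}$, and $I_k(f_{n,k})\to I_k(f_{\infty,k})$ in every $L^p$. The whole remaining content of the theorem is the upgrade from $L^2$ to almost sure convergence of the components, and here Step 3 is not merely ``hard'': it cannot be completed from the inputs you have assembled. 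Every ingredient feeding Steps 2 and 3 --- the a.s.\ convergence of $\E[F_n\mid\mathcal{G}_N]$ for fixed $N$ (which follows from $L^2$ convergence plus finite-dimensionality and never uses the a.s.\ hypothesis), the bound $\sup_n\|I_k((I-P_{N,k})(f_{n,k}-f_{\infty,k}))\|_{L^p}\to0$, the martingale convergence of $I_k(P_{N,k}f_{\infty,k})$, and Borell's tail bound --- is a consequence of $F_n\to F$ in $L^2$ alone. But $L^2$ convergence does not imply a.s.\ convergence of the components: already for $d=k=1$ take $f_{n,1}=\epsilon_n e_n$ with $(e_j)$ orthonormal in $L^2(\mu)$ and $\epsilon_n\to0$ so slowly that $\sum_n\p(|G(e_n)|>\epsilon_n^{-1})=\infty$ (e.g.\ $\epsilon_n\sim(\log\log n)^{-1/2}$); then $F_n=I_1(f_{n,1})=\epsilon_nG(e_n)\to0$ in $L^2$, all of your Step 3 inputs hold with $f_{\infty,1}=0$, yet by independence and the second Borel--Cantelli lemma $|F_n|>1$ infinitely often a.s. Hence no choice of windows can close the argument; concretely the construction is circular, since the level $N_m$ needed to make the union bound over $[n_m,n_{m+1})$ summable depends on $n_{m+1}$, while the starting point $n_m$ at which the fixed-level bracket is a.s.\ small can only be chosen after $N_m$ is fixed.

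The missing idea is that the almost sure convergence of the \emph{sum} must enter structurally rather than through its $L^2$ shadow. The two known ways to do this are: (i) the Mehler/Ornstein--Uhlenbeck route of \cite{PolyZheng} (cf.\ the sketch following Theorem \ref{thm:Poisson-ui} and formula \eqref{eq:Mehler}), where one writes $P_tF_n=\E F_n+\sum_{k=1}^d t^kI_k(f_{n,k})$, deduces the a.s.\ convergence of $P_tF_n$ for each $t$ from the a.s.\ convergence of $F_n$ via Fubini (hypercontractivity supplying the uniform integrability needed in the inner variable), and then extracts each component by a Vandermonde argument in $t$; or (ii) the decoupling route of Lemma \ref{le:reduction-to-d=1} and Theorem \ref{thm:Poisson-ui}, where the a.s.\ Cauchy condition for the sums is transferred to decoupled sums and the components are peeled off by conditioning. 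As a secondary, repairable point: $\E[I_k(f)\mid\mathcal{G}_N]$ equals $I_k$ of the full cellwise projection \emph{including} diagonal cells, whose contribution is a Hermite polynomial in the individual $G(A)$'s rather than a tetrahedral form, so Theorem \ref{thm:Poly-Zheng-independent} does not apply verbatim in your Step 2 (although for fixed $N$ finite-dimensionality makes that step's conclusion true anyway).
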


Poly and Zheng ask if an analogous result holds for Poisson multiple stochastic integrals. While we show (see Example \ref{ex:Poisson} below) that this is not the case (even for $d = 1$), we will also prove that under an additional assumption that the converging sequence is majorized by an integrable random variable, one can indeed deduce the almost sure convergence of individual summands from the convergence of the sum (Theorem \ref{thm:Poisson-ui}).

Let us mention in passing that there are many common aspects of the analysis on the Gauss and Poisson space, e.g., they both have the chaos representation property, however the Poisson space lacks many regularity aspects of the Gauss space (e.g., hypercontractivity and related strong concentration properties). Searching for counterparts of Gaussian results in the context of Malliavin calculus, concentration of measure or hypercontractivity is an active area of research (see, e.g., the recent articles \cite{MR3520016,MR3151752,2019arXiv190408211N}). Our result is another example showing that the behaviour of multiple stochastic integrals with respect to the Poisson process resembles to some extent the Gaussian case, however at the cost of introducing some additional assumptions.

\section{Results for independent random variables}\label{sec:results-independent}

We will now present new results for independent random variables, deferring the proofs to further sections. We will start by discussing certain general properties, then we will state the main theorems concerning extensions of Theorem \ref{thm:Poly-Zheng-independent}.

\subsection{Preliminaries}\label{sec:independent-prel}
In order to make the presentation more transparent we need to introduce some additional terminology. Below $\X = (X_i)_{i \in \N}$ is a sequence of independent random variables.

\begin{defi}
For a nonnegative integer $d$ define $Q_d(\X)$ -- the \emph{homogeneous tetrahedral chaos of degree} $d$, as the space of all random variables $Z$, which are limits in probability of a sequence of random variables of the form
\begin{displaymath}
  \sum_{i_1,\ldots,i_d=0}^\infty a_{i_1,\ldots,i_d} X_{i_1}\cdots X_{i_d},
\end{displaymath}
where $a \in \ell_0(\N)^{\otimes d}$ is a $d$-tensor with only finitely many non-zero coefficients.
\end{defi}

\begin{remark}
If the sequence $\X$ consists of i.i.d. Rademacher variables, then $Q_d(\X)$ coincides with the Walsh-Rademacher chaos of order $d$, however if the variables $X_i$ are i.i.d. standard Gaussian, $Q_d$ is distinct from the $d$-th chaos corresponding to the Gaussian Hilbert space spanned by $\X$. Since in this section we discuss only sequences of independent random variables, we believe that this should not lead to misunderstanding. Let us also remark that in general the spaces $Q_d$ may have a non-trivial intersection (see however Proposition \ref{prop:uniqueness} below) and (even if all variables $X_i$ are square integrable) they need not span $L_2(\X)$. Note also that $Q_0(\X)$ is just the space of almost surely constant random variables.
\end{remark}

\begin{defi}\label{def:CDP}
We will say that $\X$ has the \emph{convergence decomposition property} (abbrev. CDP) if for every nonnegative integer $d$ and every sequence $(Z_n)_{1\le n \le \infty}$ of random variables of the form
\begin{align}\label{eq:Z_n}
  Z_n = \sum_{k=0}^d Z_{n,k},
\end{align}
where $Z_{n,k} \in Q_k(\X)$, such that $Z_{n} \to Z_\infty$ a.s. as $n \to \infty$, we have
\begin{align}\label{eq:convergence}
  Z_{n,k} \to Z_{\infty,k}\; \textrm{a.s.}
\end{align}
for all $k = 0,\ldots,d$.
\end{defi}

An obvious necessary condition for the sequence $\X$ to satisfy the CDP is linear independence of the spaces $Q_k(\X)$, i.e. uniqueness of representations of random variables $Z$ as sums of variables from a finite number of spaces $Q_k(\X)$ (if such uniqueness does not hold then the sequence $Z_n = Z$ together with two distinct representations provides a counterexample for the CDP). The following proposition asserts that this minimal condition of uniqueness of the chaos decomposition is in fact also sufficient for the CDP.

\begin{prop}\label{prop:uniqueness} A sequence $\X$ satisfies the CDP if and only if for every $d\in \N$ and every $Y_0,Y_0' \in Q_0(\X), \ldots, Y_d,Y_d' \in Q_d(\X)$, if
\begin{displaymath}
  Y_0 + Y_1+\ldots+ Y_d = Y_0'+Y_1'+\ldots+Y_d'\;\textrm{a.s.},
\end{displaymath}
then for all $k \le d$, $Y_k = Y_k'$ a.s.
\end{prop}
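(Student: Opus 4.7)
The $\Rightarrow$ direction is immediate: given two decompositions $\sum_{k=0}^d Y_k = \sum_{k=0}^d Y_k'$ with $Y_k, Y_k' \in Q_k(\X)$, I would form the trivially a.s.-convergent sequence $Z_n := \sum_k Y_k$ for every $1 \le n \le \infty$, equipped with the decomposition $Z_{n,k} = Y_k$ for $n < \infty$ and $Z_{\infty,k} = Y_k'$ at infinity. The CDP then forces $Y_k = Z_{n,k} \to Z_{\infty,k} = Y_k'$ a.s., and constancy in $n$ yields $Y_k = Y_k'$ a.s.

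For $\Leftarrow$, suppose the uniqueness property holds, and let $Z_n = \sum_k Z_{n,k} \to Z_\infty = \sum_k Z_{\infty,k}$ a.s. Since each $Q_k(\X)$ is a vector space (closed under the passage to limits in probability by its very definition), the differences $W_{n,k} := Z_{n,k} - Z_{\infty,k}$ lie in $Q_k(\X)$, so the task reduces to proving that $\sum_{k=0}^d W_{n,k} \to 0$ a.s.\ implies $W_{n,k} \to 0$ a.s.\ for every $k$. My plan is a subsequence argument: along any subsequence, extract a further subsequence on which each $(W_{n,k})$ converges in probability to some $W_k^\ast \in Q_k(\X)$ (using closedness of $Q_k(\X)$ under in-probability limits). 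Then $\sum_k W_k^\ast = 0$ a.s., and the uniqueness hypothesis forces each $W_k^\ast$ to vanish; the standard subsequence characterisation of convergence in probability yields $W_{n,k} \to 0$ in probability. Almost sure convergence can then be bootstrapped by induction on the degree: once $W_{n,k} \to 0$ in probability for $k \ge 1$, the constancy of $W_{n,0}$ and the a.s.\ convergence of the full sum allow one to identify the a.s.\ limit of each piece.

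The genuinely hard step is the precompactness of each $(W_{n,k})_n$ that the subsequence extraction requires. I would obtain it by induction on $d$, proving simultaneously the CDP for degrees $\le d$ and the closedness of $Q_0(\X) + \cdots + Q_d(\X)$ under in-probability limits. For the top degree, the strategy is a normalisation: if $(W_{n,d})_n$ were not tight, divide every $W_{n,k}$ by a suitable quantile $m_n$ of $|W_{n,d}|$ along a subsequence on which $m_n \to \infty$. The rescaled sums $\sum_k W_{n,k}/m_n$ still tend to zero a.s., while $W_{n,d}/m_n$ admits a nonzero subsequential in-probability limit $W_d^\ast \in Q_d(\X)$; the inductively obtained closedness of $\sum_{k<d} Q_k(\X)$ then places $-W_d^\ast$ inside that sum, so $W_d^\ast \in Q_d(\X) \cap \sum_{k<d} Q_k(\X) = \{0\}$, contradicting $W_d^\ast \neq 0$. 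Calibrating the normalisation so that bounded quantiles actually deliver tightness, without any moment assumption on $(X_n)$, is the main technical obstacle one has to negotiate.
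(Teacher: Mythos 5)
Your forward direction (CDP $\Rightarrow$ uniqueness) is fine and is exactly the paper's remark that the constant sequence built from two distinct decompositions violates the CDP. The converse, however, has a genuine gap at its very core: the subsequence step asks you to extract, from a tight sequence $(W_{n,k})_n$, a further subsequence converging \emph{in probability} to some $W_k^\ast\in Q_k(\X)$. Closedness of $Q_k(\X)$ under limits in probability only identifies the limit \emph{if} such a subsequence exists; it does not produce one. The space $L_0$ has no sequential compactness of this kind: tightness yields subsequential convergence in distribution (Prokhorov), never in probability. For instance, $W_{n,1}=X_n$ with $\X$ tight and non-degenerate is tight in $L_0$, yet no subsequence is Cauchy in probability, since $X_{n_j}-X_{n_l}$ stays non-degenerate by independence. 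Distributional subsequential limits are useless here, because the uniqueness hypothesis is a statement about almost sure equality of specific random variables, not about laws. The same defect undermines your normalisation step, where you again invoke a ``nonzero subsequential in-probability limit'' of $W_{n,d}/m_n$. The one case where your scheme does work is the degree-$0$ component, which is a scalar, so Bolzano--Weierstrass applies --- and that is precisely the only compactness the paper ever uses.

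This is why the paper's proof of the hard direction goes through Lemma \ref{le:reduction-to-d=1} rather than any direct compactness argument: decoupling inequalities and an induction on $d$ reduce the failure of the CDP to the failure of the implication \eqref{eq:1d-implication} for affine forms $b_n+\sum_k a^{(n)}_k X_k$, where only the scalar $b_n$ needs a convergent subsequence; normalising by $b_n$ then exhibits $1$ as an in-probability limit of homogeneous linear forms, so $1\in Q_0(\X)\cap Q_1(\X)$ and uniqueness already fails at $d=1$. If you want to complete a proof of this proposition, you should either import that lemma (after which the argument is three lines) or find a substitute for the in-probability compactness you are currently assuming; as written, the $\Leftarrow$ direction does not go through.
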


\begin{remark}\label{rem:uniqueness} In fact, as follows from our main technical tool, Lemma \ref{le:reduction-to-d=1} in Section \ref{sec:independent}, if the CDP does not hold, then we can find finite sums  $Z_n = b_n + \sum_{k=0}^{k_n} a\un_k X_k$, where $b_n, a\un_k \in \R$, such that $Z_n \to 0$ almost surely while $b_n \to -1$, $Z_n - b_n \to 1$ a.s. In particular the uniqueness of the decomposition is lost already for $d = 1$.
\end{remark}

The results by Poly and Zheng have been formulated for real valued chaos variables, however it turns out that the CDP automatically extends to an analogous property for polynomial chaoses with coefficients in an arbitrary separable Banach space $(E,\|\cdot\|)$. More precisely, if we define $Q_d(\X,E)$ as the sets of limits in probability of homogeneous tetrahedral polynomials of degree $d$ in $\X$, with coefficients from $E$, then the following result holds.

\begin{prop}\label{prop:Banach} If the sequence $\X$ satisfies the CDP, then for every separable Banach space $(E,\|\cdot\|)$, every non-negative integer $d$  and every sequence $(Z_n)_{1\le n \le \infty}$ of random variables of the form
\begin{displaymath}
  Z_n = \sum_{k=0}^d Z_{n,k},
\end{displaymath}
where $Z_{n,k} \in Q_k(\X,E)$, such that $Z_{n} \to Z_\infty$ almost surely as $n \to \infty$, we have
\begin{displaymath}
  Z_{n,k} \to Z_{\infty,k}
\end{displaymath}
almost surely for all $k = 0,\ldots,d$.
\end{prop}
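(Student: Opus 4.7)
The plan is to reduce the Banach-valued conclusion to the scalar CDP by duality. For every continuous linear functional $\varphi \in E^*$, composition with $\varphi$ maps $Q_k(\X, E)$ into $Q_k(\X)$, since $\varphi$ commutes with finite linear combinations, with multiplication by the scalar random variables $X_{i_1}\cdots X_{i_k}$, and with limits in probability. Applying $\varphi$ to the identity $Z_n = \sum_{k=0}^d Z_{n,k}$ therefore yields a scalar decomposition $\varphi(Z_n) = \sum_{k=0}^d \varphi(Z_{n,k})$ with $\varphi(Z_{n,k}) \in Q_k(\X)$, and the hypothesis $\|Z_n - Z_\infty\| \to 0$ a.s.\ gives $\varphi(Z_n) \to \varphi(Z_\infty)$ a.s. The scalar CDP of $\X$ then delivers $\varphi(Z_{n,k}) \to \varphi(Z_{\infty,k})$ a.s.\ for each $k$, along a null set that depends on $\varphi$.

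To assemble these individual conclusions into a single statement I would use the separability of $E$ to pick a countable norming subset $\{\varphi_m\}_{m \ge 1}$ of the unit ball of $E^*$, so that $\|x\| = \sup_m |\varphi_m(x)|$ for all $x \in E$. A countable union of null sets then yields a full-measure event $\Omega_0$ on which, for every $m$ and every $k$, $\varphi_m(Z_{n,k}(\omega)) \to \varphi_m(Z_{\infty,k}(\omega))$. Hence each $Z_{n,k}$ converges weakly to $Z_{\infty,k}$ almost surely. Combined with the hypothesis $\|Z_n - Z_\infty\| \to 0$ a.s., this is the starting point of the remainder of the argument.

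The delicate step is the upgrade from weak to norm convergence: in an infinite-dimensional $E$, weakly null sequences (such as $e_n$ in $\ell_2$) need not be norm null, so duality alone cannot conclude. To treat this I would invoke Lemma \ref{le:reduction-to-d=1} to reduce to the case $d = 1$, in which $Z_n = b_n + V_n$ with $b_n \in E$ deterministic and $V_n \in Q_1(\X, E)$ a first-order $E$-valued chaos. Scalar CDP has already produced weak convergence of $b_n$ to $b_\infty$ and of $\varphi(V_n)$ to $\varphi(V_\infty)$ for every $\varphi$; the task is to promote the weak convergence of the deterministic part $b_n - b_\infty$ to norm convergence, after which $V_n - V_\infty = (Z_n - Z_\infty) - (b_n - b_\infty) \to 0$ in norm a.s.\ follows automatically. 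This final step has to exploit the chaos structure of $V_n$, for instance via a uniform boundedness / Banach--Steinhaus argument applied to the evaluation maps $\varphi \mapsto \varphi(Z_{n,k}(\omega))$ on the countable norming family, ruling out the possibility that a weakly-null-but-not-norm-null deterministic sequence is compensated by a random first-order chaos.

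The principal obstacle is precisely this last passage from weak to norm convergence; it is the only point at which pure Banach-space duality is insufficient and the probabilistic structure of the chaos spaces $Q_k(\X, E)$ must be used in an essential way.
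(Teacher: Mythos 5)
Your reduction by duality is on the right track, but the proof is not complete: the passage from weak to norm convergence, which you yourself flag as the principal obstacle, is exactly the step that needs an argument, and the one you sketch does not close it. A uniform boundedness (Banach--Steinhaus) argument applied to the evaluation maps $\varphi \mapsto \varphi(Z_{n,k}(\omega))$ can only yield $\sup_n \|Z_{n,k}(\omega)\| < \infty$, not $\|Z_{n,k}(\omega)\| \to 0$; and testing against a fixed countable norming family gives only coordinatewise information which, as your own example of $(e_n)$ in $\ell_2$ shows, is strictly weaker than norm convergence. So as written the argument stops short of the conclusion.

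The missing idea is to let the functional depend on $n$. In the case $d=1$ write $Z_n = b_n + V_n$ with $b_n \in E$ deterministic and $V_n = \sum_k a^{(n)}_k X_k$, $a^{(n)}_k \in E$ (after the usual reduction to finitely supported coefficients). By Hahn--Banach choose norm-one functionals $\varphi_n$ with $\varphi_n(b_n) = \|b_n\|$. Since $\|\varphi_n\| = 1$ and $\|Z_n\| \to 0$ a.s., the scalar sequence $\varphi_n(Z_n) = \|b_n\| + \sum_k \varphi_n(a^{(n)}_k) X_k$ tends to $0$ a.s.; the scalar implication \eqref{eq:1d-implication} (equivalent to the CDP by Corollary \ref{cor:equivalence}), which quantifies over \emph{all} sequences of real constants and coefficients so that the dependence of $\varphi_n$ on $n$ is harmless, then gives $\|b_n\| \to 0$, and hence $V_n \to 0$ in norm a.s. This is precisely how the base case of Lemma \ref{le:reduction-to-d=1} is handled in the paper; note also that once you invoke that lemma (whose statement is already Banach-valued), the proposition follows in one line from Corollary \ref{cor:equivalence}, with no separate duality argument needed.
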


Let us conclude this section with a comment on the assumed structure of the limiting random variable $Z_\infty$. In the formulation of Theorem \ref{thm:Poly-Zheng-independent} and Proposition \ref{prop:Banach} as well as in Definition \ref{def:CDP} it is assumed that $Z_\infty$ can be also represented as a finite sum of variables from $Q_k(\X)$. The next proposition states that if $\X$ satisfies the CDP, then it is in fact enough to assume just the existence of the limit.

\begin{prop}\label{prop:no-special-form}
Assume that the sequence $\X$ satisfies the CDP and let $(E,\|\cdot\|)$ be a separable Banach space. Consider a sequence of random variables $(Z_n)_{1\le n<\infty}$  as in \eqref{eq:Z_n}, with $Z_{n,k} \in Q_k(\X,E)$. If the sequence $Z_n$ converges in probability to some random variable $Z_\infty$, then there exist unique random variables $Z_{\infty,k}$, $k=0,\ldots,d$ such that $Z_\infty = \sum_{k=0}^d Z_{\infty,k}$ and $Z_k \in Q_k(\X,E)$.
\end{prop}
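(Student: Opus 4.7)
The plan is to use the Banach-valued CDP (Proposition \ref{prop:Banach}) to show that, under the CDP, each component sequence $(Z_{n,k})_n$ is Cauchy in probability; completeness of the space of $E$-valued random variables under this topology, together with closedness of $Q_k(\X,E)$ under probability limits, will then deliver the candidate decomposition. Uniqueness will follow from a second application of Proposition \ref{prop:Banach}.

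I would first verify that $Q_k(\X,E)$ is closed under convergence in probability, by a diagonal argument: given $Y_m \in Q_k(\X,E)$ with $Y_m \to Y$ in probability, pick finite tetrahedral polynomials $F_m$ with $\p(\|F_m - Y_m\| > 1/m) < 1/m$; then $F_m \to Y$ in probability, so $Y \in Q_k(\X,E)$.

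The crucial step is the Cauchy claim, which I would establish by contradiction. If $(Z_{n,k_0})_n$ fails to be Cauchy in probability for some $k_0$, there exist $\epsilon > 0$ and indices $m_j, n_j \to \infty$ with $\p(\|Z_{m_j,k_0}-Z_{n_j,k_0}\| > \epsilon) > \epsilon$. The differences $W_j := Z_{m_j}-Z_{n_j}$ tend to $0$ in probability (since both $Z_{m_j}$ and $Z_{n_j}$ converge in probability to $Z_\infty$) and decompose as $W_j = \sum_{k=0}^d (Z_{m_j,k}-Z_{n_j,k})$ with summands in $Q_k(\X,E)$. Passing to a further subsequence along which $W_j \to 0$ almost surely, and noting that $0 = \sum_{k=0}^d 0$ is a valid chaos decomposition of the limit, Proposition \ref{prop:Banach} applies and gives $Z_{m_j,k}-Z_{n_j,k} \to 0$ almost surely for every $k$, contradicting the choice of $k_0$.

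Consequently each $(Z_{n,k})_n$ converges in probability to some $Z_{\infty,k}\in Q_k(\X,E)$, and summing yields $Z_\infty = \sum_{k=0}^d Z_{\infty,k}$. For uniqueness, if $Z_\infty = \sum_k Y_k = \sum_k Y_k'$ with $Y_k,Y_k'\in Q_k(\X,E)$, I would apply Proposition \ref{prop:Banach} to the (trivially a.s.\ convergent) constant sequence $Z_n = Z_\infty$ with components $Z_{n,k} := Y_k$ for $n < \infty$ and $Z_{\infty,k} := Y_k'$: the conclusion $Y_k \to Y_k'$ almost surely forces $Y_k = Y_k'$ almost surely. The main obstacle is the contradiction argument in the Cauchy step; what makes it go through is the pleasant observation that the zero limit admits the trivial chaos decomposition in every degree, so that Proposition \ref{prop:Banach} can legitimately be invoked on the increments $W_j$ without any prior knowledge about $Z_\infty$.
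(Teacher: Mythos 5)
Your proposal is correct and follows essentially the same route as the paper: both arguments apply the Banach-valued CDP (Proposition \ref{prop:Banach}) to differences $Z_{m}-Z_{n}$, which converge to $0$ in probability and whose limit admits the trivial decomposition $0=\sum_k 0$, to deduce that each component sequence is Cauchy in probability, and then conclude by completeness of $L_0(E)$. The only differences are cosmetic — you phrase the Cauchy step as a contradiction where the paper uses the subsequence characterization of convergence in probability, and you spell out the closedness of $Q_k(\X,E)$ under probability limits and the uniqueness step, both of which the paper leaves implicit.
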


\subsection{Main results}\label{sec:independent-main}

We will now present the main results for sequences of independent random variables. We will start with a mild sufficient condition for the CDP to hold. Before we formulate it let us note that since the spaces $Q_k(\X)$ do not change when one scales the variables $X_n$ by nonzero factors, there is no loss of generality in assuming that $\X$ is a tight sequence.

\begin{theorem}\label{thm:general-sequences} Let $\X$ be a tight sequence of independent random variables. Assume that for some $\delta > 0$ and all $n \in \N$,
\begin{align}\label{eq:anti-concentration}
\sup_{ x \in \R} \p(|X_n| \in (x-\delta,x+\delta)) \le 1 - \delta.
\end{align}
Assume moreover that there exist $C \ge 0$, $t_0 > 0$ such that for any $t \in (0,t_0]$ and any $n \in \N$,
\begin{align}\label{eq:non-iid}
  \Big|\E X_n \Ind{|X_n| \le \frac{1}{t}}\Big| \le C\Big(\frac{1}{t}\p\Big(|X_n| > \frac{1}{t}\Big) + t \Var\Big(X_n\Ind{|X_n|\le \frac{1}{t}}\Big)\Big).
\end{align}
Then the sequence $\X$ satisfies the CDP.
\end{theorem}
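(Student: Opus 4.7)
The plan is to reduce the problem to a question about linear forms via Proposition~\ref{prop:uniqueness} together with the refinement announced in Remark~\ref{rem:uniqueness} (which itself is a consequence of the main technical reduction Lemma~\ref{le:reduction-to-d=1}). It will suffice to prove that under the hypotheses of the theorem, no sequence of finite linear combinations $L_n = \sum_{k=0}^{k_n} a\un_k X_k$ converges almost surely to a nonzero constant. Arguing by contradiction I assume that $L_n \to 1$ a.s.\ and aim to derive $L_n \to 0$ in probability, which will be the desired contradiction.

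On an enlarged probability space I take an independent copy $\X' = (X_i')$ of $\X$ and pass to the symmetrized sums $\widetilde{L}_n = L_n - L_n' = \sum_k a\un_k \widetilde{X}_k$, where $\widetilde{X}_k = X_k - X_k'$ is symmetric. The assumption $L_n \to 1$ a.s.\ yields $\widetilde{L}_n \to 0$ a.s. Splitting $\p(|X_k|\in(x-\delta,x+\delta))$ into its positive and negative parts, I would check that \eqref{eq:anti-concentration} transfers (with possibly smaller constants) to an anti-concentration estimate $\sup_x \p(\widetilde{X}_k \in (x-\delta',x+\delta')) \le 1-\delta'$ uniform in $k$, which combined with tightness controls the L\'evy concentration function of each summand.

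Next I would invoke Kolmogorov--Rogozin-type anti-concentration inequalities for sums of independent symmetric variables to convert the in-probability convergence of $\widetilde{L}_n$ to $0$ into quantitative decay of truncated moments. Concretely, for any fixed truncation level $M$ I expect to deduce
\begin{displaymath}
\sum_k (a\un_k)^2 \E\bigl[\widetilde{X}_k^2 \Ind{|a\un_k \widetilde{X}_k|\le M}\bigr] \longrightarrow 0 \quad \text{and} \quad \sum_k \p(|a\un_k \widetilde{X}_k|>M) \longrightarrow 0
\end{displaymath}
as $n\to\infty$ (a quantitative converse form of the three-series theorem, driven by the uniform anti-concentration). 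Desymmetrizing by comparing $\Var(X_k\Ind{|X_k|\le 1/t})$ with the corresponding quantity for $\widetilde{X}_k$ (up to universal constants, and using tightness to ensure the medians stay bounded), one obtains the analogous bounds for the truncated variances and tails of the summands $a\un_k X_k$ themselves.

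Finally, the truncated-mean hypothesis~\eqref{eq:non-iid}, applied with $t$ of order $1/(M|a\un_k|)$ and summed over $k$, bounds $\bigl|\sum_k \E[a\un_k X_k \Ind{|a\un_k X_k|\le M}]\bigr|$ by a universal constant times the two quantities shown above to vanish. A Chebyshev bound on the centered truncated sum, combined with the tail estimate to handle the exceptional event $\{|a\un_k X_k|>M\}$ for some $k$, then yields $L_n \to 0$ in probability, contradicting $L_n \to 1$ a.s. The main obstacle I anticipate is the quantitative inversion of the Kolmogorov--Rogozin inequality in the non-identically distributed regime and the coordination of truncation levels with the scaling in~\eqref{eq:non-iid}; a secondary technical subtlety is that~\eqref{eq:anti-concentration} controls only the law of $|X_k|$ rather than of $X_k$, which introduces some bookkeeping when transferring estimates between $X_k$, $\widetilde{X}_k$ and their truncations.
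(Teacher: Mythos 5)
Your proposal is correct and follows essentially the same route as the paper: reduction to linear forms via Lemma~\ref{le:reduction-to-d=1}, symmetrization plus the anti-concentration hypothesis to force uniform negligibility of the coefficients and the vanishing of truncated variances and tails (the paper packages this converse three-series step as Proposition~\ref{prop:convergence-to-one}, proved via Paley--Zygmund and a L\'evy-type maximal argument, and applies it directly to the un-symmetrized array $t_{n,k}X_k$ truncated at level $1$, which sidesteps your desymmetrization step entirely), and then \eqref{eq:non-iid} to contradict the convergence of the truncated means to $1$. One small slip to fix: matching the truncation $|a\un_k X_k|\le M$ requires taking $t=|a\un_k|/M$ in \eqref{eq:non-iid}, not $t$ of order $1/(M|a\un_k|)$, and one should record that $\max_k|a\un_k|\to 0$ (which your anti-concentration step yields) so that this $t$ eventually falls below $t_0$.
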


The condition \eqref{eq:non-iid} may seem quite technical, therefore let us now state a corollary to the above theorem, which is a strengthening of Theorem \ref{thm:Poly-Zheng-independent}.

\begin{cor}\label{cor:uniformly-integrable}
If the variables $X_n$ are centered of variance one and the family $\{X_n^2\}_{n\in \N}$ is uniformly integrable, then $\X$ satisfies the CDP.
\end{cor}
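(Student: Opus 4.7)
The plan is to deduce the corollary by verifying the three hypotheses of Theorem~\ref{thm:general-sequences}. Tightness is immediate: Chebyshev's inequality together with $\E X_n^2 = 1$ gives $\p(|X_n| > M) \le M^{-2}$ uniformly in $n$. The substantive work lies in the moment condition~\eqref{eq:non-iid} and the anti-concentration condition~\eqref{eq:anti-concentration}.

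For~\eqref{eq:non-iid}, I would use $\E X_n = 0$ to rewrite $\E X_n\Ind{|X_n| \le 1/t} = -\E X_n\Ind{|X_n| > 1/t}$, and a layer-cake estimate then gives
\[
\bigl|\E X_n\Ind{|X_n|\le 1/t}\bigr| \le \E |X_n|\Ind{|X_n| > 1/t} \le \tfrac{1}{t}\p(|X_n|>1/t) + t\,\E X_n^2\Ind{|X_n|>1/t}.
\]
Uniform integrability of $\{X_n^2\}$ combined with $\E X_n = 0$ and $\E X_n^2 = 1$ forces both $\E X_n\Ind{|X_n|\le 1/t} \to 0$ and $\E X_n^2\Ind{|X_n|\le 1/t}\to 1$ uniformly in $n$ as $t\to 0$; in particular $\Var(X_n\Ind{|X_n|\le 1/t}) \to 1$ uniformly. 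For $t$ small enough this variance exceeds $\tfrac{1}{2}$, hence $\E X_n^2\Ind{|X_n|>1/t} \le 2\Var(X_n\Ind{|X_n|\le 1/t})$, and~\eqref{eq:non-iid} follows with an absolute constant~$C$.

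For~\eqref{eq:anti-concentration}, I would argue by contradiction. A failure of the condition for every $\delta > 0$ would produce sequences $\delta_m \to 0$ and $x_m, n_m$ with $\p(|X_{n_m}| \in (x_m - \delta_m, x_m + \delta_m)) > 1 - \delta_m$. Combining $\E X_{n_m} = 0$ with $\Var X_{n_m} = 1$ pins down the admissible range of $x_m$ and forces the complementary event---of probability at most $\delta_m$---to carry a uniformly positive share of $\E X_{n_m}^2$. Uniform integrability of $\{X_n^2\}$ forbids concentrating second moment on sets of vanishing probability, yielding the contradiction.

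The main obstacle is the anti-concentration step: one has to handle separately the regime in which the putative concentration interval sits near the origin (ruled out immediately by variance~$1$) and the regime in which it is bounded away from the origin, where uniform integrability of the squares is essential to exclude near-atomic configurations. The remainder of the proof is routine bookkeeping with the uniform-integrability modulus.
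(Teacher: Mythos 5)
Your verification of tightness and of condition \eqref{eq:non-iid} is correct and essentially the same as the paper's: both exploit $\E X_n=0$ to trade the truncated mean for the tail mean, bound the latter by $t\,\E X_n^2\Ind{|X_n|>1/t}$, and use uniform integrability to guarantee $\Var(X_n\Ind{|X_n|\le 1/t})\ge 1/2$ uniformly in $n$ for $t$ small (the paper routes this through $t\,\E X_n^2=t$ and Cauchy--Schwarz, but the content is identical).

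The anti-concentration step is where your sketch has a genuine gap. As written, the hypothesis you negate concerns $|X_{n_m}|$, so the event $A_m$ of probability $>1-\delta_m$ is the \emph{two-sided} set $\{X_{n_m}\in(x_m-\delta_m,x_m+\delta_m)\}\cup\{X_{n_m}\in(-x_m-\delta_m,-x_m+\delta_m)\}$, and then $\E X_{n_m}=0$ pins down nothing: a Rademacher variable is centered, has variance one and trivially uniformly integrable squares, yet $|X_n|\equiv 1$ is maximally concentrated, so the contradiction you announce does not exist for that reading. What is actually used in the proof of Theorem \ref{thm:general-sequences}, and what the paper's own proof of the corollary establishes, is a bound on the L\'evy concentration function of $X_n$ itself, i.e.\ $\sup_{x}\p(|X_n-x|<\delta)\le 1-\delta$. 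For that version your contradiction argument does go through, but with the roles of your two regimes reversed: if $x_m$ stays away from $0$, then $\E X_{n_m}=0$ combined with Cauchy--Schwarz on the complement, $|\E X_{n_m}\ind{A_m^c}|\le\sqrt{\delta_m}$, already yields a contradiction and forces $x_m\to 0$; and it is precisely in the near-origin regime that variance one alone is \emph{not} enough (a variable equal to $0$ with probability $1-1/m$ and to $\pm\sqrt{m}$ otherwise is centered with variance one), so that uniform integrability must be invoked there to prevent the second moment from living on $A_m^c$, a set of probability at most $\delta_m$. Note finally that the paper argues directly and quantitatively rather than by contradiction, using de la Vall\'ee Poussin to extract a convex majorant $\varphi$ and a Legendre-transform (Paley--Zygmund type) estimate to produce an explicit uniform lower bound on $\p(|X_n-x|\ge 1/2)$; your compactness route is acceptable once the reading of \eqref{eq:anti-concentration} and the two regimes are straightened out.
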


\begin{remark} The assumption \eqref{eq:anti-concentration} is an anti-concentration type condition, preventing the random variables $X_n$ from being \emph{too deterministic}. It is not difficult to see that if the variables become too strongly concentrated in points or small intervals away from zero, then the CDP cannot hold, as illustrated by the following example, which answers a question posed by Poly and Zheng \cite{PolyZheng}.
\end{remark}

\begin{example}
Assume that $X_n$ is a centered two point variable of variance one (not necessarily symmetric), i.e., for some $p_n \in (0,1)$,
\begin{displaymath}
\p\Big(X_n = \frac{1-p_n}{\sqrt{p_n(1-p_n)}}\Big) = p_n,\; \p\Big(X_n = \frac{-p_n}{\sqrt{p_n(1-p_n)}}\Big) = 1 - p_n.
\end{displaymath}

Assume that $\limsup_{n \to \infty} p_n = 1$ (the situation when $\liminf_{n \to \infty} p_n = 0$ is completely analogous). In particular there exists an increasing sequence $k_n$ such that
\begin{displaymath}
\sum_{n=0}^\infty (1-p_{k_n}) < \infty.
\end{displaymath}
Set now $Z_{n,0} = 1$, $Z_{n,1} = -\frac{\sqrt{p_{k_n}(1-p_{k_n})}}{1-p_{k_n}} X_{k_n}$, $Z_n = Z_{n,0} + Z_{n,1}$. By the Borel-Cantelli lemma we obtain that with probability one, for sufficiently large $n$, $X_{k_n} =\frac{1-p_{k_n}}{\sqrt{p_{k_n}(1-p_{k_n})}}$ and as a consequence $Z_{n,1} = -1$, $Z_n = 0$. Setting $Z_{\infty,0} = Z_{\infty,1} = 0$ one can see that there is no convergence of of $Z_{n,i}$ to $Z_{\infty,i}$. One can also see that the decomposition of $Z_\infty$ into a sum of variables from $Q_0(\X)$ (constants) and $Q_1(\X)$ is not unique, since $-1 \in Q_1(\X)$ (cf. Proposition \ref{prop:uniqueness} and Remark \ref{rem:uniqueness}). Of course if one insists on representing $Z_\infty$ in the form $c + \sum_{n=0}^\infty a_n X_n$, where $c, a_n \in \R$ then one must have $c = 0$, $a_n \equiv 0$.

On the other hand, if $p_n's$ are separated from zero and one, then it follows from Theorem \ref{thm:Poly-Zheng-independent} that the sequence $\X$ satisfies the CDP.
\end{example}

\medskip \medskip

If the variables $X_n$ are i.i.d., one can show that the condition of Theorem \ref{thm:general-sequences} is in fact necessary for the CDP to hold, i.e., we have the following theorem.

\begin{theorem}\label{thm:iid}
  Assume that the variables $X_n$, $n \in \N$ are i.i.d. Then the sequence $\X$ satisfies the CDP if and only if
there exists $C \ge 0$ and $t_0 > 0$ such that for all $t \in (0,t_0)$,
\begin{align}\label{eq:iff-iid}
  \Big|\E X_0 \Ind{|X_0|\le \frac{1}{t}}\Big| \le C\Big(\frac{1}{t}\p\Big(|X_0| > \frac{1}{t}\Big) + t \Var\Big(X_0\Ind{|X_0|\le \frac{1}{t}}\Big)\Big).
\end{align}
\end{theorem}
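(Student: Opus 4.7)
The implication from \eqref{eq:iff-iid} to the CDP reduces to Theorem~\ref{thm:general-sequences} after handling degenerate cases. Tightness is automatic for an i.i.d.\ sequence, and \eqref{eq:iff-iid} is the i.i.d.\ specialization of \eqref{eq:non-iid}. If $|X_0|$ is not almost surely constant, its L\'evy concentration function $\sup_x \p(|X_0|\in(x-\delta,x+\delta))$ is strictly less than $1$ for all sufficiently small $\delta > 0$, which yields \eqref{eq:anti-concentration} after possibly shrinking $\delta$, so Theorem~\ref{thm:general-sequences} applies directly. Otherwise $|X_0|\equiv c$ a.s.\ for some $c\ge 0$. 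If $c=0$ then $Q_k(\X)=\{0\}$ for every $k\ge 1$ and the CDP is trivial. If $c>0$ then $X_0\in\{-c,c\}$, and for $t<1/c$ we have $\p(|X_0|>1/t) = 0$ while $\Var(X_0\Ind{|X_0|\le 1/t})$ stays bounded, so \eqref{eq:iff-iid} forces $\E X_0 = 0$. Then $X_0$ is a rescaled Rademacher variable, has all moments bounded, and Theorem~\ref{thm:Poly-Zheng-independent} applies.

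For the converse, assume that \eqref{eq:iff-iid} fails. By Proposition~\ref{prop:uniqueness} it is enough to produce a nonzero constant lying in $Q_1(\X)$, since then $1 = 1+0 = 0+1$ furnishes two distinct chaos decompositions with summands in $Q_0(\X)$ and $Q_1(\X)$. Write $\mu_t := \E X_0\Ind{|X_0|\le 1/t}$, $\sigma_t^2 := \Var(X_0\Ind{|X_0|\le 1/t})$, $\pi_t := \p(|X_0|>1/t)$. The failure of \eqref{eq:iff-iid} delivers a sequence $t_n \downarrow 0$ and constants $K_n \to \infty$ (freely chosen, say $K_n = n^{3}$) with $|\mu_{t_n}|\ge K_n(\pi_{t_n}/t_n + t_n\sigma_{t_n}^2)$. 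Abbreviating subscripts $t_n$ to $n$, the inequality $(a+b)^2\ge 4ab$ gives $\mu_n^2 \ge 4K_n^2\pi_n\sigma_n^2$, which leaves room to pick a positive integer
\begin{equation*}
 N_n \in \Big[\,\tfrac{n^2\sigma_n^2}{\mu_n^2},\; \tfrac{1}{n^2\pi_n}\,\Big]
\end{equation*}
(with the upper bound read as $+\infty$ when $\pi_n = 0$). Set $c\un := 1/(N_n\mu_n)$ and
\begin{equation*}
 Z_n \;:=\; c\un\sum_{k=1}^{N_n} X_k \;\in\; Q_1(\X).
\end{equation*}
A first application of Borel--Cantelli to $\p(\exists k \le N_n:\ |X_k|>1/t_n) \le N_n\pi_n \le 1/n^2$ shows that almost surely, for all large $n$, every $X_k$ with $k\le N_n$ coincides with its truncation at level $1/t_n$. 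On this event $Z_n$ is a rescaled sum of i.i.d.\ bounded variables with $\E Z_n = 1$ and $\Var Z_n = \sigma_n^2/(N_n\mu_n^2)\le 1/n^2$, so Chebyshev's inequality combined with a second Borel--Cantelli forces $Z_n\to 1$ almost surely, hence $1\in Q_1(\X)$ and the CDP fails.

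The main obstacle is precisely this parameter calibration: the quantitative failure of \eqref{eq:iff-iid} has to open a gap between $\sigma_n^2/\mu_n^2$ and $1/\pi_n$ wide enough to fit $N_n$ in such a way that both the truncation tail $N_n\pi_n$ and the Chebyshev fluctuation bound $\sigma_n^2/(N_n\mu_n^2)$ are summable in $n$; this is exactly what the inequality $\mu_n^2\ge 4K_n^2\pi_n\sigma_n^2$ with $K_n \to \infty$ supplies. The remaining ingredients (Borel--Cantelli, Chebyshev, and the reduction to uniqueness via Proposition~\ref{prop:uniqueness}) are routine once the construction is set up.
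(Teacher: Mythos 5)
Your proof is correct. The forward direction follows the paper's route exactly (reduce to Theorem~\ref{thm:general-sequences}, with \eqref{eq:iff-iid} as the i.i.d.\ form of \eqref{eq:non-iid}); your separate treatment of the case $|X_0|\equiv c$ is in fact slightly more careful than the paper's, since \eqref{eq:anti-concentration} as literally stated concerns $|X_n|$ and genuinely fails for a symmetric two-point law. (For that sub-case it would be cleaner to invoke Corollary~\ref{cor:uniformly-integrable} rather than Theorem~\ref{thm:Poly-Zheng-independent}, whose conclusion is phrased for $\ell_2$-coefficient representations rather than for the CDP as defined through $Q_k(\X)$; this is a cosmetic point.) The converse is where you diverge: the paper builds essentially the same linear forms ($k_n\approx (t_n|\E X_0\Ind{|X_0|\le 1/t_n}|)^{-1}$ summands with coefficient $\pm t_n$), verifies convergence in probability to $1$ via the triangular-array weak-law characterization of Proposition~\ref{prop:convergence-to-one}, and then extracts an a.s.\ convergent subsequence; you instead exploit the freedom to take $K_n=n^3\to\infty$ so that both the tail count $N_n\pi_{t_n}$ and the Chebyshev bound $\sigma_{t_n}^2/(N_n\mu_{t_n}^2)$ are summable, getting almost sure convergence directly by two Borel--Cantelli applications and bypassing Proposition~\ref{prop:convergence-to-one} entirely. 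This buys a shorter, self-contained argument at the cost of being specific to convergence to a constant, whereas the paper's proposition is a reusable two-sided characterization (it is also what powers the proof of Theorem~\ref{thm:general-sequences}). One small point you should make explicit: the existence of an integer in $[\,n^2\sigma_{t_n}^2/\mu_{t_n}^2,\;1/(n^2\pi_{t_n})\,]$ needs not only the gap $\mu_{t_n}^2\ge 4K_n^2\pi_{t_n}\sigma_{t_n}^2$ but also that the right endpoint is at least $1$; this follows from $|\E X_0\Ind{|X_0|\le 1/t_n}|\le 1/t_n$ combined with the failure inequality, which forces $\pi_{t_n}\le 1/K_n=n^{-3}$ and hence $1/(n^2\pi_{t_n})\ge n$.
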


\medskip

\begin{remark}
Using the fact that  $\lim_{t \searrow 0} t|\E X_0 \Ind{|X_0| \le t}| = 0$, it is easy to see that \eqref{eq:iff-iid} is satisfied for some $C \ge 0$, $t_0 > 0$ and all $t \in (0,t_0)$ if and only if for some $C_1 \ge 0$, $t_1 > 0$ and all $t \in (0,t_1)$,
\begin{align}\label{eq:iff-iid-1}
\Big|\E X_0 \Ind{|X_0|\le \frac{1}{t}}\Big| \le C_1\Big(\frac{1}{t}\p\Big(|X_0| > \frac{1}{t}\Big) + t \E X_0^2 \Ind{|X_0|\le \frac{1}{t}}\Big).
\end{align}
Furthermore, this is equivalent to the existence of $C_2\ge 0$, such that
\begin{align}\label{eq:iff-iid-2}
\Big|\E X_0 \Ind{|X_0|\le \frac{1}{t}}\Big| \le C_2\Big(\frac{1}{t}\p\Big(|X_0| > \frac{1}{t}\Big) + t \E X_0^2 \Ind{|X_0|\le \frac{1}{t}}\Big)
\end{align}
for all $t > 0$.

Indeed, if $X$ is not equal identically to zero, then for $t$ large enough (say $t > t_2$) and $C_3$ large enough, $|\E X_0 \Ind{|X_0|\le \frac{1}{t}}\Big| \le \frac{1}{t} \le C_3\frac{1}{t}\p(|X_0| > \frac{1}{t})$, while \eqref{eq:iff-iid-2} for $t \in [t_1,t_2]$ can be easily obtained from \eqref{eq:iff-iid-1} for $t < t_1$ (with $C_2$ depending only on $C_1,t_1,t_2$).

Using the Fubini theorem and \eqref{eq:iff-iid-2} one can easily prove that if $Y$ is any random variable independent of $X_0$ and $X_0$ satisfies \eqref{eq:iff-iid}, then so does $X_0Y$ (possibly with different $t_0,C$). This clearly follows from Theorem \ref{thm:iid} but is perhaps somewhat hidden at the level of inequality \eqref{eq:iff-iid}.
\end{remark}

\begin{example} It is clear from the law of large numbers that if $\X$ is an i.i.d. sequence with $X_0$ integrable but not centered, then $\X$ cannot satisfy the CDP. Let us present a sequence violating the CDP with $\E X_0 = 0$. To this end consider $X_0$ satisfying
\begin{displaymath}
  \p\Big(X_0 = \frac{2^n}{n^2}\Big) = \frac{1}{2^{n+1}} \textrm{ for } n = 1,2, \ldots,
\end{displaymath}
and
\begin{displaymath}
  \p\Big(X_0 = -\frac{\pi^2}{6}\Big) = \frac{1}{2}.
\end{displaymath}
Then $\E X_0 = 0$ and for $t = n^2/2^n$ and $n$ large, we obtain
\begin{displaymath}
  \E X_0\Ind{|X_0| \le \frac{1}{t}} = \E X_0\Ind{|X_0| > \frac{1}{t}} = \sum_{k=n+1}^\infty \frac{1}{2k^2} \ge \frac{1}{2(n+1)}.
\end{displaymath}
On the other hand
\begin{displaymath}
  \frac{1}{t}\p\Big(|X_0| > \frac{1}{t}\Big) = \frac{2^n}{n^2} \cdot \frac{1}{2^{n+1}} = \frac{1}{2n^2}
\end{displaymath}
and
\begin{displaymath}
  t \Var\Big(X_0\Ind{|X_0|\le \frac{1}{t}}\Big) \le \frac{n^2}{2^n} \E |X_0|^2\Ind{|X_0| \le 2^n/n^2} = \frac{n^2}{2^n}\Big(\frac{\pi^4}{72} + \sum_{k=1}^n \frac{2^{k-1}}{k^4}\Big) \le \frac{K}{n^2}
\end{displaymath}
for some numerical constant $K$.

This shows that the condition \eqref{eq:iff-iid} is not satisfied and as a consequence $\X$ consisting of i.i.d. copies of $X_0$ does not satisfy the CDP.

\end{example}

\medskip

Our last result concerning independent random variables is the following corollary on reversing the triangle inequality in $L_0$, which should be compared with Lemma \ref{le:anti-triangle} from the Appendix, dealing with $L_p$ spaces for $p\ge 1$. It turns out that in contrast to the $L_p$ case, reversing the triangle inequality at the level of tails requires additional regularity of the distribution of the underlying random variables.

\begin{cor}\label{cor:anti-triangle} Assume that the variables $X_n$, $n \in \N$ are i.i.d. and $X_0$ satisfies \eqref{eq:iff-iid}. Then for any $d \in \N$ there exists a constant $C_d$ such that for any separable Banach space $(E,\|\cdot\|)$, any sequence of random variables $Z_i \in Q_i(\X,E)$, $i = 0,\ldots,d$ and any $t > 0$,
\begin{align}\label{eq:anti-triangle-L0}
\sum_{i=0}^d \p(\|Z_i\| \ge t) \le C_d \p(\|Z_0+Z_1+\ldots+Z_d\| \ge t/C_d).
\end{align}
Moreover, if \eqref{eq:anti-triangle-L0} holds for $E = \R$ and $d=1$, then $X_0$ satisfies \eqref{eq:iff-iid}.
\end{cor}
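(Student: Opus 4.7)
The plan is to prove the two implications separately. For the sufficient direction, invoke Theorem~\ref{thm:iid} to obtain the CDP for $\X$; Proposition~\ref{prop:Banach} (together with Proposition~\ref{prop:no-special-form}) then ensures that the CDP, and the closedness of the space of decomposable limits in probability, hold for arbitrary separable Banach spaces $E$. Consequently, the sum map $\Phi(Z_0,\ldots,Z_d) := \sum_{k=0}^d Z_k$ is a continuous linear bijection from the F-space $\prod_{k=0}^d Q_k(\X,E)$ (with the product topology of convergence in probability) onto a closed subspace of $L_0(\Omega;E)$. The open mapping theorem for F-spaces then makes $\Phi^{-1}$ continuous, so that tails of $S := \sum_k Z_k$ control tails of each $Z_k$ qualitatively.

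To upgrade this to the uniform quantitative inequality \eqref{eq:anti-triangle-L0}, I would proceed by contradiction, exploiting the dilation invariance of the spaces $Q_k(\X,E)$. If no universal $C_d$ worked, one could find for each $n$ a separable Banach space $E_n$, a threshold $t_n > 0$, and a tuple $(Z_k^{(n)})_{k=0}^d$ with $Z_k^{(n)} \in Q_k(\X,E_n)$, such that $\sum_k \p(\|Z_k^{(n)}\| \ge t_n) > n\, \p(\|S^{(n)}\| \ge t_n/n)$. Two normalizations clean up the setting: an isometric embedding of each $E_n$ into the universal separable Banach space $C([0,1])$ removes the $E_n$-dependence, and rescaling $Z_k^{(n)} \leftarrow Z_k^{(n)}/t_n$ (using that $Q_k(\X,E)$ is scale-invariant) reduces to $t_n = 1$. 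Since then $\p(\|S^{(n)}\| \ge 1/n) < (d+1)/n \to 0$, one has $S^{(n)} \to 0$ in probability; passing to an almost surely convergent subsequence and applying Proposition~\ref{prop:Banach} with the trivial decomposition $0 = \sum_k 0$ of the limit forces each $Z_k^{(n)} \to 0$ almost surely along that subsequence, and hence $\p(\|Z_k^{(n)}\| \ge 1) \to 0$.

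The main obstacle is that the direct contradiction requires $\sum_k \p(\|Z_k^{(n)}\| \ge 1)$ to be bounded below, whereas a priori it may vanish along the subsequence (consistent with, but not contradicting, the failure inequality since both sides may tend to $0$). To rectify this, one performs a second rescaling $Z_k^{(n)} \mapsto Z_k^{(n)}/s_n$ with $s_n > 0$ chosen so that $\sum_k \p(\|Z_k^{(n)}\| \ge s_n)$ stays bounded below by a fixed constant, while simultaneously $s_n$ is not so small that the rescaled sum $S^{(n)}/s_n$ fails to vanish in probability (this requires $s_n$ to stay large compared with the characteristic scale $1/n$ of $S^{(n)}$). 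Carefully balancing these two constraints, by exploiting the freedom in $t_n$ inside the failure hypothesis, is the technical heart of the argument; once achieved, the CDP applied to the rescaled tuples yields the contradiction.

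For the converse, assume \eqref{eq:anti-triangle-L0} holds for $E = \R$ and $d = 1$, but suppose $X_0$ violates \eqref{eq:iff-iid}. By Theorem~\ref{thm:iid} the sequence $\X$ fails CDP, and Remark~\ref{rem:uniqueness} provides constants $b_n \in \R$ and finite first-chaos sums $Y_n = \sum_k a_k^{(n)} X_k \in Q_1(\X)$ with $b_n + Y_n \to 0$ almost surely, $b_n \to -1$, and $Y_n \to 1$ almost surely. Setting $Z_0 := b_n \in Q_0(\X)$, $Z_1 := Y_n \in Q_1(\X)$, and using $t = 1/2$ in \eqref{eq:anti-triangle-L0} would give $\p(|b_n| \ge 1/2) + \p(|Y_n| \ge 1/2) \le C_1 \p(|b_n + Y_n| \ge 1/(2C_1))$; the left-hand side tends to $2$ while the right-hand side tends to $0$, a contradiction. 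Hence \eqref{eq:iff-iid} must hold.
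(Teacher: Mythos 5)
Your converse direction is correct and is essentially the paper's argument read contrapositively: failure of \eqref{eq:iff-iid} yields, via Theorem \ref{thm:iid} and Remark \ref{rem:uniqueness}, constants $b_n \to -1$ and first-chaos variables $Y_n \to 1$ a.s.\ with $b_n + Y_n \to 0$ a.s., which visibly violates \eqref{eq:anti-triangle-L0} at $t = 1/2$. (The paper phrases it directly: \eqref{eq:anti-triangle-L0} for $d=1$, $E=\R$ gives the implication \eqref{eq:1d-implication}, hence the CDP, hence \eqref{eq:iff-iid}.)

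The direct direction, however, has a genuine gap exactly where you locate ``the technical heart of the argument'', and the fix you propose cannot work. After normalizing to $t_n = 1$ you correctly observe that the CDP forces $\p(\|Z_k^{(n)}\| \ge 1) \to 0$ along a subsequence, so both sides of the failed inequality may tend to zero and no contradiction results. Your remedy is a second dilation $Z_k^{(n)} \mapsto Z_k^{(n)}/s_n$ intended to keep $\sum_k \p(\|Z_k^{(n)}\| \ge s_n)$ bounded below while keeping $S^{(n)}/s_n \to 0$ in probability. But rescaling only moves the threshold; it does not increase the mass in the tail. If each $Z_k^{(n)}$ vanishes outside an event of probability $\epsilon_n \to 0$ and is enormous on that event, then $\p(\|Z_k^{(n)}\| \ge s) \le \epsilon_n$ for \emph{every} $s > 0$, so no choice of $s_n$ bounds the left-hand side below, yet such examples are perfectly consistent with the failure hypothesis (both probabilities are small, only their ratio is wrong). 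The paper's resolution is not a rescaling but an amplification by independent copies: it first reduces to finite-dimensional tetrahedral forms with values in $\ell_\infty^{N_n}$ and arranges the ratio of the two probabilities to be $n^2$, then takes $m_n = \lceil 1/\p(\|Z''_{n,k}\| \ge 1) \rceil$ i.i.d.\ copies of the whole tuple (built from disjoint blocks of the i.i.d.\ sequence $\X$) and stacks them into a single $c_0$-valued tetrahedral polynomial with the sup norm. The maximum over copies boosts $\p(\|\widehat{Z}_{n,k}\| \ge 1)$ to at least $1/2$, while the union bound gives $\p(\|\widehat{Z}_n\| \ge n^{-2}) \le 2/n^2$, which is summable, so Borel--Cantelli yields $\widehat{Z}_n \to 0$ a.s.\ while $\widehat{Z}_{n,k} \not\to 0$; Proposition \ref{prop:Banach} then contradicts the CDP. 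Without some such amplification device your argument does not close. Note also that your preliminary open-mapping-theorem step only yields a qualitative, $E$-dependent continuity statement in the $L_0$ topology and contributes nothing toward the uniform constant $C_d$, so it can be dropped.
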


\section{Multiple Poisson integrals}\label{sec:results-Poisson}

Let us now pass to the Poissonian setting and discuss a counterpart of Theorem \ref{thm:Poly-Zheng-Gaussian}. Since a formal introduction of all the underlying notions is quite lengthy here we will only present the counterexample and the formulation of our theorem, using standard notation from the theory of Poisson processes and Poisson multiple integrals (see, e.g., \cite{MR3791470}), postponing the precise definitions to Section \ref{sec:Poisson}, which will be devoted solely to the Poissonian case.

\begin{example}\label{ex:Poisson}Consider a Poisson process $\eta$ with uniform intensity on the interval $[0,1]$ . Let $f_n = n\ind{[0,\frac{1}{n}]}$ and let $F_n = I_1(f_n) = \int_0^1 f_n d\eta - \int_0^1 f_n dx$ be the compensated Poisson stochastic integral of $f_n$ (in particular $F_n$ is an element of the first Wiener-Poisson chaos, see Section \ref{sec:Poisson} for the definition). Then $F_n$ converges almost surely to $-1$. Since $-1$ is an element of the Poisson chaos of order $0$, we see that the counterpart of Theorem \ref{thm:Poly-Zheng-Gaussian} does not hold even for $d=1$.
\end{example}

On the other hand we have the following result.
\begin{theorem}\label{thm:Poisson-ui}
Let $\eta$ be a Poisson point process on a $\sigma$-finite measurable space $(\cX,\cF,\lambda)$ and for $n \ge 1$ let $I_n$ be the corresponding $n$-fold (compensated) stochastic integral on $L_{2,s}(\mathcal{X}^n,\lambda^{\otimes n})$ (the space of square integrable functions, symmetric in their arguments). Consider  $d \in \N$ and a sequence $(F_n)_{1\le n < \infty}$ of random variables of the form
\begin{displaymath}
  F_n = \E F_n + \sum_{k=1}^d I_k(f_{n,k}),
\end{displaymath}
where $f_{n,k} \in L_{2,s}(\mathcal{X}^k,\lambda^{\otimes k})$. If the sequence $F_n$ converges almost surely to some random variable $F_\infty$, and there exists an integrable random variable $X$ such that for all $n$, $|F_n| \le X$ a.s., then $\E F_n \to \E F_\infty$ and there exist random variables $F_{\infty,k}$, $k=1,\ldots,d$ such that as $n \to \infty$, $I_k(f_{n,k})$ converges almost surely and in $L_1$ to $F_{\infty,k}$. If moreover $(F_n)_{1\le n < \infty}$ is bounded in $L_2$, then there exist functions $f_{\infty,k} \in L_{2,s}(\mathcal{X}^k,\lambda^{\otimes k})$, such that for all $1\le k\le d$, $F_{\infty,k} = I_k(f_{n,k})$.
\end{theorem}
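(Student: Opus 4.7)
The plan is to combine dominated convergence with a discretization that reduces the problem to the independent-variable CDP of Theorem \ref{thm:general-sequences}.

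First, since $|F_n| \le X \in L^1$ and $F_n \to F_\infty$ a.s., the dominated convergence theorem yields $F_n \to F_\infty$ in $L^1$, and in particular $\E F_n \to \E F_\infty$. Subtracting the means, I may assume $\E F_n = 0$, so that the task reduces to proving a.s.\ and $L^1$ convergence of each $I_k(f_{n,k})$.

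Next, using $\sigma$-finiteness of $\lambda$, I would choose a refining sequence of countable measurable partitions $(\pi_m)_{m \ge 1}$ of $\cX$ generating $\cF$, whose atoms $A_{m,i}$ all satisfy $\lambda(A_{m,i}) \in [c_m, C_m]$ for some $0 < c_m \le C_m < \infty$. For each fixed $m$, the compensated counts $\xi_{m,i} := \eta(A_{m,i}) - \lambda(A_{m,i})$ form an independent, centred sequence $\X^{(m)} = (\xi_{m,i})_i$. The three hypotheses of Theorem \ref{thm:general-sequences} are then easily verified---tightness from bounded variance; the anti-concentration \eqref{eq:anti-concentration} since $e^{-c_m} < 1$ bounds the mass of any single atom of the Poisson law; and \eqref{eq:non-iid} from the super-exponential decay of Poisson tails---so $\X^{(m)}$ satisfies the CDP. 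Moreover, for $f$ a symmetric simple function on $\pi_m^{\otimes k}$ vanishing on the combinatorial diagonal, $I_k(f)$ is precisely a tetrahedral polynomial of degree $k$ in the $\xi_{m,i}$'s, i.e.\ an element of $Q_k(\X^{(m)})$.

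Conditioning $F_n$ on $\cF_m := \sigma(\eta(A): A \in \pi_m)$ produces polynomials $F_n^{(m)} = \E[F_n \mid \cF_m]$ in the $\xi_{m,i}$'s whose homogeneous tetrahedral components $Z_{n,k}^{(m)} \in Q_k(\X^{(m)})$ approximate $I_k(f_{n,k})$ in $L^2$, with a diagonal remainder that vanishes as $m \to \infty$. Since $F_n \to F_\infty$ in $L^1$, also $F_n^{(m)} \to F_\infty^{(m)} := \E[F_\infty \mid \cF_m]$ in $L^1$; extracting almost surely convergent subsequences, applying the CDP of $\X^{(m)}$ together with the uniqueness of the chaos decomposition (Proposition \ref{prop:uniqueness}), and noting that $(Z_{n,k}^{(m)})_m$ forms (up to the vanishing diagonal error) a martingale in $m$ with limit $I_k(f_{n,k})$, a diagonal/Borel--Cantelli argument combining the a.s.\ convergence $Z_{n,k}^{(m)} \to G_k^{(m)}$ at each scale $m$ with the martingale convergence $G_k^{(m)} \to F_{\infty,k}$ as $m \to \infty$ yields $I_k(f_{n,k}) \to F_{\infty,k}$ almost surely. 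Transferring the $L^1$ domination of $F_n$ through the chaos decomposition provides uniform integrability of each $I_k(f_{n,k})$ and upgrades the a.s.\ convergence to $L^1$.

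For the second part, $\sup_n \E F_n^2 < \infty$ together with orthogonality of the Poisson chaoses gives $\sup_n \|f_{n,k}\|_{L^2} < \infty$. Extracting a weak-$L^2$-convergent subsequence $f_{n_j,k} \rightharpoonup f_{\infty,k} \in L_{2,s}(\cX^k, \lambda^{\otimes k})$, the weak $L^2$-continuity of $I_k$ yields $I_k(f_{n_j,k}) \rightharpoonup I_k(f_{\infty,k})$, and comparison with the a.s.\ limit $F_{\infty,k}$ identifies $F_{\infty,k} = I_k(f_{\infty,k})$. The main obstacle will be the diagonal passage to the limit, in particular extracting genuine a.s.\ (not merely in-probability) convergence of the full sequence $I_k(f_{n,k})$ from the scale-by-scale CDP conclusions, and deducing uniform integrability of the individual chaos pieces solely from the $L^1$ bound $|F_n| \le X$, without appealing to $L^2$-boundedness.
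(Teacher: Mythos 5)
Your reduction to the scale-by-scale CDP of Theorem \ref{thm:general-sequences} does not go through, and the two places where it breaks are exactly the content of the theorem. First, $\E[F_n\mid\cF_m]$ is \emph{not} a tetrahedral polynomial in the compensated counts $\xi_{m,i}=\eta(A_{m,i})-\lambda(A_{m,i})$: conditioning a multiple Poisson integral onto a partition produces diagonal contributions such as $I_2(\ind{A\times A})=\xi^2-\eta(A)$, i.e.\ monomials with repeated indices (Charlier polynomials of a single $\xi_{m,i}$), which lie outside every space $Q_k(\X^{(m)})$ as defined in the paper. So the CDP simply does not apply to $F_n^{(m)}$, and re-expanding into tetrahedral form scrambles the correspondence between chaos order and tetrahedral degree. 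Second, even after repairing this by a genuinely tetrahedral discretization, the interchange of the limits $n\to\infty$ and $m\to\infty$ is not a technicality: the CDP is purely qualitative, and its hypotheses for $\X^{(m)}$ degenerate as $m\to\infty$ (the anti-concentration constant for $\eta(A)-\lambda(A)$ tends to $0$ as $\lambda(A)\to0$). This degeneration is unavoidable, because Example \ref{ex:Poisson} shows the limiting statement is \emph{false} without the integrable majorant; hence any correct proof must inject the domination hypothesis into the decomposition step itself, not only into the convergence $F_n^{(m)}\to F_\infty^{(m)}$. You flag the "diagonal passage to the limit" and the uniform integrability of the individual chaos pieces as remaining obstacles, but these are precisely where the theorem lives, so the proposal as written is incomplete.

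For comparison, the paper does not discretize at a family of scales at all. It picks a single sequence of tetrahedral step-function kernels $g_{n,k}$ (vanishing on the combinatorial diagonal by construction, not by conditioning) with $\sum_n\|I_k(f_{n,k})-I_k(g_{n,k})\|_2<\infty$, so Borel--Cantelli transfers both the a.s.\ convergence and the integrable majorant to $Z_n=\E F_n+\sum_k I_k(g_{n,k})$. It then decouples $Z_n$ using $d$ independent copies of $\eta$ (Theorem \ref{thm:decoupling} applied to $\ell_\infty$-valued kernels, which preserves the Cauchy-in-sup condition and the $L_1$ bound on $\sup_l|Z_l|$), and uses the integrable majorant via Fubini to show that the conditional expectations $\E(Z_n^{dec}\mid\eta_{s_1},\dots,\eta_{s_k})$ converge a.s.; an induction on $k$ then peels off $\E F_n$ and each homogeneous decoupled piece. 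Decoupling back gives a.s.\ convergence of $I_k(g_{n,k})$, and the reverse triangle inequality (Lemma \ref{le:anti-triangle}, applied with $\ell_\infty$ coefficients to $\sup_n$) supplies the integrable majorant for each piece, hence $L_1$ convergence --- the mechanism your sketch is missing. Your treatment of the $L_2$-bounded case (weak compactness plus weak closedness of the $k$-th chaos and identification of the weak limit with the $L_1$ limit) is correct and coincides with the paper's.
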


\begin{example}
The assumption that $(F_n)_{n=1}^\infty$ is bounded in $L_2$  cannot be dropped, i.e., if the other assumptions of the theorem are satisfied, but this one is not, it is possible that the sequence $F_n$ converges almost surely to a random variable which is not in $L_2$. To see this it is enough to consider $d=1$, a function $f_\infty \colon \mathcal{X}\to [0,\infty)$, which is integrable but not square integrable and a sequence of functions $f_{n} \in L_2(\mathcal{X},\mu)\cap L_1(\mathcal{X},\mu)$ converging pointwise to $f$ from below. Setting $F_n = I_1(f_n) = \int_\mathcal{X} f_nd\eta - \int_{\mathcal{X}} f_nd\lambda$ one can easily see that $F_n$ converges almost surely to $I_1(f) \notin L_2(\eta)$, moreover $|F_n| \le \int f_\infty d\eta + \int f_\infty d\lambda \in L_1(\eta)$, so $|F_n|$ is indeed dominated by an integrable random variable. On the other hand it is not clear to us whether under the assumption of $L_2$ boundedness or even under a stronger assumption that $F_n$ converge to $F_\infty$ in $L_2$, one can drop the assumption of majorization by an integrable random variable.
\end{example}

\section{Further comments}

\subsection{Overview of the proof} The original proofs of Theorems \ref{thm:Poly-Zheng-independent} and Theorem \ref{thm:Poly-Zheng-Gaussian} due to Poly and Zheng are based on the notion of hypercontractivity, which over the years has proved very useful in analysis of polynomials in random variables. Our approach is based on decoupling inequalities, introduced by McConnell and Taqqu in the 1980s \cite{MR841595} and subsequently developed by many authors, in particular by Kwapie\'n \cite{MR893914} in the case of multilinear forms, with the most general result dealing with $U$-statistics and $U$-processess obtained by de la Pe\~na and Montgomery-Smith \cite{MR1261237} (see Theorem \ref{thm:decoupling} in Appendix \ref{app:decoupling}). This technique reduces the analysis of homogeneous tetrahedral polynomials in a single sequence of independent random variables, to polynomials in multiple copies of this sequence, which are linear in each of the copies (see \cite{MR1652323}, where general decoupling inequalities for $U$-statistics have been used for polynomials in a similar way as in the proof of Lemma \ref{le:reduction-to-d=1} below). This often allows for conditioning and inductive arguments based on the analysis of sequences of independent random variables, which are well understood. The downside of the decoupling approach, when compared with hypercontractivity methods is a typically much  worse dependence of constants in the inequalities on the degree of the polynomial. On the other hand decoupling inequalities are more general as they work for all sequences of independent random variables and also in arbitrary Banach spaces, while hypercontractivity depends heavily on the distribution of the underlying sequence and on the Banach space considered. One can note that from a conditional application of the results by Poly and Zheng it follows that all sequences of symmetric random variables satisfy the CDP, while not all of them satisfy hypercontractive estimates (see \cite{MR1085342} for a characterization in terms of the distribution). This, and the fact that the CDP is a qualitative and not quantitative statement, suggests that in this case decoupling may work more efficiently than hypercontraction. On the other hand we should mention that Corollary \ref{cor:uniformly-integrable} may probably follow by hypercontractive estimates that can be recovered from the proofs in \cite{MR1085342}. Also, Theorem \ref{thm:Poisson-ui} can be proved by means of the Mehler formula for the Poisson process, mimicking the approach Poly and Zheng used in the Gaussian case. We present a sketch of this approach in Section \ref{sec:Poisson}. In terms of notation it is in fact simpler than our main approach based on decoupling, which on the other hand seems to be more easily generalizable to other settings (in particular to more general random measures or $U$-statistics).

\subsection{Organisation of the article}
Section \ref{sec:independent} is devoted to proofs of results for independent random variables. It is split into Subsection \ref{sec:technical lemmas}, where we formulate the main technical lemma and use it to prove Propositions \ref{prop:uniqueness}, \ref{prop:Banach}, \ref{prop:no-special-form}, and Subsection \ref{sec:proofs-indep}, where we present proofs of Theorem \ref{thm:general-sequences}, Corollary \ref{cor:uniformly-integrable}, Theorem \ref{thm:iid} and Corollary \ref{cor:anti-triangle}. Section \ref{sec:Poisson} contains the proof of Theorem \ref{thm:Poisson-ui} on multiple Poisson integrals. In Appendix \ref{app:decoupling} we formulate the main decoupling results that all our proofs are based on, and in Appendix \ref{app:LLN} an elementary proof of Proposition \ref{prop:convergence-to-one} used in Section \ref{sec:proofs-indep}.

\subsection{Acknowledgements} The Author would like to thank prof. Krzysztof Oleszkiewicz for instructive conversations and encouragement to investigate problems discussed in this article.

\section{Proofs of results for independent random variables}\label{sec:independent}

In this section we provide proofs of results concerning sequences of independent random variables, formulated in Section \ref{sec:results-independent}. First we will state a technical lemma and demonstrate abstract propositions, then prove the main theorems.

\subsection{A technical lemma and proofs of results from Section \ref{sec:independent-prel}} \label{sec:technical lemmas}

In what follows by $\stackrel{\p}{\to}$ we denote convergence in probability.

\begin{lemma}\label{le:reduction-to-d=1}
Let $\X = (X_n)_{n=0}^\infty$ be a sequence of independent random variables, satisfying the following implication. For all sequences $a\un = (a\un_0,\ldots,a\un_{k_n}) \in \R^{k_n+1}$, $b_n \in \R$, $n\in \N$, \begin{align}\label{eq:1d-implication}
  \Big( b_n + \sum_{k=0}^{k_n} a\un_k X_k \stackrel{\p, n\to \infty}{\to}0 \Big) \implies \Big(b_n \stackrel{n\to \infty}{\to} 0\Big).
\end{align}
Then for every separable Banach space $(E,\|\cdot\|)$, every non-negative integer $d$  and every sequence $(Z_n)_{1\le n \le \infty}$ of random variables of the form
\begin{displaymath}
  Z_n = \sum_{k=0}^d Z_{n,k},
\end{displaymath}
where $Z_{n,k} \in Q_k(\X,E)$, such that $Z_{n} \to Z_\infty$ almost surely as $n \to \infty$, we have
\begin{displaymath}
  Z_{n,k} \to Z_{\infty,k} \; \textrm{a.s.}
\end{displaymath}
for all $k = 0,\ldots,d$.
\end{lemma}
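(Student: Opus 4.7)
Proceed by induction on $d$. The case $d=0$ is vacuous since $Z_n=Z_{n,0}\in Q_0(\X,E)$ is deterministic and $Z_n\to Z_\infty$ forces convergence in norm of the single summand. Assuming the lemma in all smaller degrees, it suffices to isolate the top-degree component and prove $Z_{n,d}\to Z_{\infty,d}$ a.s.; the inductive hypothesis applied to $\sum_{k<d}Z_{n,k}=Z_n-Z_{n,d}$ then handles the remaining components.

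As a preliminary step, I would lift the scalar 1D implication \eqref{eq:1d-implication} to a Banach-valued analog: if $\beta_n,\alpha\ub{n}_k\in E$ and $\beta_n+\sum_k\alpha\ub{n}_k X_k\to 0$ in probability, then $\|\beta_n\|\to 0$. This is a Hahn-Banach argument: choose $f_n\in E^*$ with $\|f_n\|=1$ and $f_n(\beta_n)=\|\beta_n\|$, and observe that
\[
\Bigl|\,\|\beta_n\|+\sum\nolimits_k f_n(\alpha\ub{n}_k)X_k\Bigr|=\Bigl|f_n\Bigl(\beta_n+\sum\nolimits_k\alpha\ub{n}_k X_k\Bigr)\Bigr|\le\Bigl\|\beta_n+\sum\nolimits_k\alpha\ub{n}_k X_k\Bigr\|\to 0
\]
in probability, so the scalar hypothesis applied with $b_n=\|\beta_n\|$ and $a\ub{n}_k=f_n(\alpha\ub{n}_k)$ yields $\|\beta_n\|\to 0$. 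This already settles the $d=1$ case.

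For $d\ge 2$ the main tool is decoupling. On an enlarged product probability space introduce independent copies $\X\ub{1},\dots,\X\ub{d}$ of $\X$, and for each $k$ let $\tilde Z_{n,k}$ be the $k$-linear tetrahedral form obtained from $Z_{n,k}$ by replacing the product $X_{i_1}\cdots X_{i_k}$ by $X\ub{1}_{i_1}\cdots X\ub{k}_{i_k}$; set $\tilde Z_n=\sum_{k=0}^d\tilde Z_{n,k}$. The decoupling inequalities from Appendix~\ref{app:decoupling} yield a two-sided tail comparison between $\|Z_n-Z_m\|$ and $\|\tilde Z_n-\tilde Z_m\|$, so the Cauchy-in-probability property of $(Z_n)$ (implied by its a.s.\ convergence) transfers to $(\tilde Z_n)$. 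Passing to a subsequence we may assume $\tilde Z_{n_j}$ converges a.s.\ on the product space. Conditioning on $\X\ub{1},\dots,\X\ub{d-1}$ splits $\tilde Z_{n_j}$ as $B_{n_j}+\sum_i A\ub{n_j}_i X\ub{d}_i$, where $B_{n_j}=\sum_{k=0}^{d-1}\tilde Z_{n_j,k}$ and the $E$-valued coefficients $A\ub{n_j}_i$ depend only on $\X\ub{1},\dots,\X\ub{d-1}$. On a full-measure set in the conditioning this is exactly the situation of the Banach-valued 1D implication for $\X\ub{d}$ (which is equidistributed with $\X$ and so satisfies \eqref{eq:1d-implication}), yielding almost sure norm convergence of $B_{n_j}$ and therefore of $\tilde Z_{n_j,d}=\tilde Z_{n_j}-B_{n_j}$.

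Finally, the reverse direction of the decoupling comparison translates convergence in probability of $\tilde Z_{n,d}-\tilde Z_{m,d}$ back to convergence in probability of $Z_{n,d}-Z_{m,d}$. Running the previous argument along an arbitrary subsequence of $(n)$ and invoking the subsequence principle gives $Z_{n,d}\to Z_{\infty,d}$ in probability, and combining with the a.s.\ convergence of the sum $Z_n$ together with the rigidity of the chaos decomposition enforced by the 1D hypothesis upgrades this to a.s.\ convergence. The inductive hypothesis then closes the argument. The step I expect to be most delicate is precisely this last upgrade, since decoupling is naturally a tail-type inequality rather than a maximal one, so moving from almost sure convergence of a decoupled subsequence on the product space to almost sure convergence of the undecoupled top-degree component on the original space is the nontrivial bookkeeping that the proof must carry out.
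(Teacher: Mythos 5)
Your overall strategy---induction on $d$, Hahn--Banach to lift the scalar implication \eqref{eq:1d-implication} to $E$-valued affine forms, and decoupling plus conditioning via Fubini for the induction step---is the same as the paper's. However, two steps in your induction step do not go through as written.

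First, the direction of the decoupling inequality. To transfer the almost sure convergence of $Z_n$ to your decoupled version $\tilde Z_n$ you need a bound of the form $\p(\|\tilde Z_n\|>t)\le C_d\,\p(\|Z_n\|>t/C_d)$, i.e., the \emph{decoupled-by-undecoupled} comparison, and in Theorem \ref{thm:decoupling} that direction is only available when the kernels $h_{i_1,\ldots,i_d}$ are symmetric under simultaneous permutation of indices and arguments. Your $\tilde Z_n$, which feeds copies $1,\ldots,k$ into the degree-$k$ part, corresponds to kernels of the form $\sum_k c_k\, a^{(n,k)}_{i_1,\ldots,i_k}x_1\cdots x_k$, which are not symmetric in this sense. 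The paper's decoupled version instead averages the degree-$k$ part over all $\binom{d}{k}$ choices of which copies to use, precisely so that the kernels become symmetric and both directions of Theorem \ref{thm:decoupling} apply. The price is that the lower-degree parts of the symmetrized decoupled sum then involve the copy $\X\ub{d}$ as well, so your single conditioning on $\X\ub{1},\ldots,\X\ub{d-1}$ no longer isolates the top homogeneous component: what the 1D implication kills is only the part involving one fixed copy $r$, and one has to run the argument for every $r\in[d]$, apply the induction hypothesis to the interleaved sequence built from all $d$ copies (after checking that this sequence again satisfies \eqref{eq:1d-implication}) to split each surviving sum into homogeneous pieces, and then average over $r$---each degree-$k$ term avoids exactly $d-k$ values of $r$---to recover the full degree-$k$ decoupled parts for $k\le d-1$ and hence the top part by subtraction.

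Second, the upgrade to almost sure convergence, which you flag but do not supply, is not mere bookkeeping: convergence in probability of $Z_{n,d}$ together with almost sure convergence of $Z_n$ does not by itself give almost sure convergence of $Z_{n,d}$, and a subsequence argument only ever returns convergence in probability. The paper's mechanism is to apply Theorem \ref{thm:decoupling} not to individual variables but to the $E^{\{n,\ldots,m\}}$-valued form $(Z_l)_{l=n}^m$ equipped with the sup norm; the tail comparison then becomes a two-sided comparison of $\p(\max_{n\le l\le m}\|Z_l\|>\varepsilon)$ with its decoupled analogue, and since almost sure convergence is equivalent to $\lim_{n\to\infty}\sup_{m\ge n}\p(\max_{n\le l\le m}\|Z_l\|>\varepsilon)=0$, it transfers in both directions without passing to subsequences. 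With these two repairs (plus the routine preliminary reduction, via Borel--Cantelli, to the case where each $Z_{n,k}$ is a finite tetrahedral form) your outline becomes the paper's proof.
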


Before we present the proof of the above lemma, let us make a few comments and describe its basic consequences. In particular we will prove Propositions \ref{prop:Banach}, \ref{prop:uniqueness} and \ref{prop:no-special-form}.

\medskip

Let us start with the following remark, which will be used in the proof of Lemma \ref{le:reduction-to-d=1}.

\begin{remark}\label{re:1d-prob-as}
Consider the implication \eqref{eq:1d-implication} with convergence in probability replaced by the almost sure convergence. It is easy to see that this formally weaker property of the sequence $\X$ is in fact equivalent to \eqref{eq:1d-implication}. Indeed, assume that such a weaker version holds and consider any $a\un, b_n$ such that
\begin{displaymath}
  b_n + \sum_{k=0}^{k_n} a\un_k X_k \stackrel{\p}{\to} 0.
\end{displaymath}
For every increasing sequence $n_m$ of nonnegative integers we can find a subsequence $n_{m_l}$ such that
\begin{displaymath}
  b_{n_{m_l}} + \sum_{k=0}^{k_{n_{m_l}}} a\un_k X_k \stackrel{a.s., l\to \infty}{\to} 0,
\end{displaymath}
which implies that $b_{n_{m_l}} \to 0$ as $l \to \infty$. Therefore, $b_n \to 0$ as $n\to \infty$, which proves \eqref{eq:1d-implication}.
\end{remark}

The above remark and Lemma \ref{le:reduction-to-d=1} immediately implies the following corollary.
\begin{cor}\label{cor:equivalence} A sequence $\X$ of independent random variables satisfies the CDP if and only if it satisfies the implication \eqref{eq:1d-implication}.
\end{cor}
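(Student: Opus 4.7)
The plan is to prove both implications directly, with the heavy lifting done by the already-available Lemma \ref{le:reduction-to-d=1} and Remark \ref{re:1d-prob-as}.

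For the direction \eqref{eq:1d-implication} $\Rightarrow$ CDP, I would simply invoke Lemma \ref{le:reduction-to-d=1} with the Banach space $E = \R$. The lemma's hypothesis is precisely the implication \eqref{eq:1d-implication}, and its conclusion, specialized to $E = \R$, is exactly the CDP as given in Definition \ref{def:CDP} (the assumption that $Z_\infty$ itself admits a decomposition $\sum_{k=0}^d Z_{\infty,k}$ with $Z_{\infty,k} \in Q_k(\X)$ is present in both statements, so there is nothing extra to check).

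For the direction CDP $\Rightarrow$ \eqref{eq:1d-implication}, suppose $\X$ satisfies the CDP and we are given real sequences $(a^{(n)}_0,\ldots,a^{(n)}_{k_n})$ and $b_n$ with $b_n + \sum_{k=0}^{k_n} a^{(n)}_k X_k \stackrel{\p}{\to} 0$. By Remark \ref{re:1d-prob-as} it is enough to treat the case of almost sure convergence, so I would extract an arbitrary subsequence, then a further subsequence along which the convergence becomes almost sure, and work on that subsequence (concluding $b_n \to 0$ along the original sequence by the usual subsequence principle). On the subsequence, set $Z_{n,0} = b_n \in Q_0(\X)$, $Z_{n,1} = \sum_{k=0}^{k_n} a^{(n)}_k X_k \in Q_1(\X)$, and $Z_{\infty,0} = Z_{\infty,1} = 0$ so that $Z_\infty := 0 = Z_{\infty,0} + Z_{\infty,1}$ is a valid decomposition in $Q_0(\X) + Q_1(\X)$. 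Applying the CDP with $d = 1$ yields $Z_{n,0} \to Z_{\infty,0}$ a.s., i.e., $b_n \to 0$, establishing \eqref{eq:1d-implication}.

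This proof is essentially a bookkeeping exercise once the technical lemma is in hand, so I do not expect any real obstacle; the only point requiring a touch of care is the passage from convergence in probability to almost sure convergence via subsequences when using the CDP to derive \eqref{eq:1d-implication}, which is handled cleanly by Remark \ref{re:1d-prob-as}. All genuine difficulty lives inside Lemma \ref{le:reduction-to-d=1}, whose proof presumably relies on the decoupling inequalities advertised in the overview.
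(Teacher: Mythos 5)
Your proof is correct and is exactly the argument the paper intends: the forward direction is Lemma \ref{le:reduction-to-d=1} with $E=\R$, and the converse is the CDP applied with $d=1$ to $Z_{n,0}=b_n$, $Z_{n,1}=\sum_k a\un_k X_k$, using Remark \ref{re:1d-prob-as} (i.e., the subsequence principle) to pass from convergence in probability to almost sure convergence. The paper states this only as ``the above remark and Lemma \ref{le:reduction-to-d=1} immediately imply the corollary,'' and your write-up is the straightforward unpacking of that sentence.
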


Having the above corollary we can easily prove Propositions \ref{prop:Banach} and \ref{prop:uniqueness}.

\begin{proof}[Proof of Proposition \ref{prop:Banach}] If $\X$ satisfies the CDP, then by Corollary \ref{cor:equivalence} it satisfies the implication \eqref{eq:1d-implication}. The assertion of the proposition follows thus by Lemma \ref{le:reduction-to-d=1}.
\end{proof}

\begin{proof}[Proof of Proposition \ref{prop:uniqueness}]
The necessity of the uniqueness of the decomposition is obvious. To show that it is also sufficient for the CDP, note that by Lemma \ref{le:reduction-to-d=1} if this property does not hold, then we can find a sequence of linear forms in the variables $X_i$ converging in probability to 1. Thus $1 \in Q_1(\X)$. Since obviously $1 \in Q_0(\X)$, this shows that there is no uniqueness of the decomposition for $d=1$.
\end{proof}

Finally let us demonstrate Proposition \ref{prop:no-special-form}.

\begin{proof}[Proof of Proposition \ref{prop:no-special-form}]
It is enough to prove the existence of the variables $Z_{\infty, k}$. Consider thus any strictly increasing sequences $l_n, m_n$ of integers. Since $Z_n$ converges in probability, the difference $S_n := Z_{l_n} - Z_{m_n}$ converges in probability to zero. Thus from an arbitrary subsequence of $S_n$ we can select a further subsequence along which the almost sure convergence holds. Using the CDP, we obtain that along this subsequence also the homogeneous parts of $S_n$ tend almost surely to zero. Thus from every subsequence of $S_{n,k} := Z_{l_n,k} - Z_{m_n,k}$ one can select a further subsequence converging almost surely to zero, which implies that $S_{n,k}$ converges to zero in probability. But, as the sequences $l_n, m_n$ were arbitrary, this implies that the Cauchy condition for convergence in probability is satisfied, and so by the completeness of $L_0(E)$, $Z_{n,k}$ converges in probability to some random variable $Z_{\infty,k}$. Clearly we have then $Z_\infty = \sum_{k=0}^d Z_{\infty,k}$.
\end{proof}

We will now pass to the proof of Lemma \ref{le:reduction-to-d=1}.

\begin{proof}[Proof of Lemma \ref{le:reduction-to-d=1}]
We will use the notation as in Definition \ref{def:CDP}. Clearly it is enough to consider the case $Z_{\infty,k} = 0$ for all $k \le d$. Also, we can assume that $Z_{n,k}$ are
multilinear tetrahedral forms in a finite number of variables $X_i$, i.e.
\begin{align}\label{eq:finite-sums}
  Z_{n,k} = \sum_{i_1,\ldots,i_k=0}^{\infty} a^{(n,k)}_{i_1,\ldots,i_k} X_{i_1}\cdots X_{i_k}
\end{align}
where $a^{(n,k)} \in \ell_0^{\otimes n}(\N)$ and there exist $m_{n,k} < \infty$ such that $a^{(n,k)}_{i_1,\ldots,i_k} = 0$ if $\max(i_1,\ldots,i_k) > m_{n,k}$.

Indeed, by the Borel-Cantelli lemma we can find $\widetilde{Z}_{n,k}$, $0\le k\le d, n\ge 0$, being such tetrahedral forms, such that with probability one for all $k \le d$, $\widetilde{Z}_{n,k} - Z_{n,k} \to 0$ as $n\to \infty$. In particular $\sum_{k=0}^d \widetilde{Z}_{n,k} \stackrel{a.s.}{\to} 0$ and  for all $k$, $Z_{n,k} \stackrel{a.s.}{\to 0}$ iff
$\widetilde{Z}_{n,k} \stackrel{a.s.}{\to} 0$. For the purpose of the proof we can thus assume that $\widetilde{Z}_{n,k} = Z_{n,k}$.

We will now  prove by induction on $d\ge 1$ that for \emph{any sequence} $\X$, satisfying \eqref{eq:1d-implication}, if $Z_{n,k}$, $k\le d$, are as in \eqref{eq:finite-sums} and $Z_n = \sum_{k=0}^d Z_{n,k} \stackrel{a.s.}{\to} 0$, then for all $k \le d$, $Z_{n,k} \stackrel{a.s.}{\to} 0$.

\medskip

\paragraph{\bf The base of induction: $d = 1$}
In this case we have $Z_n = Z_{n,0} + Z_{n,1}$, where $Z_{n,0} \in E$ is deterministic and $Z_{n,1} = \sum_{k=0}^{k_n} a^{(n,1)}_k X_k$ for some $a^{(n,1)}_k \in E$.
By the Hahn-Banach theorem there exist norm one linear functionals  $\varphi_n$  on $E$ such that $\varphi_n(Z_{n,0}) = \|Z_{n,0}\|$. If $Z_n \stackrel{a.s.}{\to} 0$, then
$\varphi_n(Z_n) = \|Z_{n,0}\| + \sum_{k=0}^{k_n} \varphi_n(a^{(n,1)}_k) X_k \stackrel{a.s.}{\to} 0$. By assumpion \eqref{eq:1d-implication} this implies that $\|Z_{n,0}\| \to 0$, which clearly yields $Z_{n,1} \stackrel{a.s.}{\to} 0$.

\paragraph{\bf The induction step} Let us assume that the induction hypothesis holds for all numbers smaller than $d$. Note that by the convergence $Z_n \stackrel{a.s.}{\to} 0$, we have for arbitrary $\varepsilon > 0$,
\begin{align}\label{eq:as-finite-dimensional}
\lim_{n\to \infty} \sup_{m \ge n} \p(\max_{n \le l \le m} \|Z_{l}\| > \varepsilon) = 0.
\end{align}

For $l \in \N$ define the functions $h^{(l)}_{i_1,\ldots,i_d} \colon \R^d \to E$ by the formula
\begin{displaymath}
  h^{(l)}_{i_1,\ldots,i_d}(x_1,\ldots,x_d) = \sum_{k=0}^d \frac{(d-k)!}{d!} \frac{(N-d)!}{(N-k)!}\sum_{1\le r_1 \neq \ldots\neq r_{k} \le d} a^{(l,k)}_{i_{r_1},\ldots,i_{r_k}}x_{r_1}\cdots x_{r_k}
\end{displaymath}
and note that the random vector $(Z_n,\ldots,Z_m)$ can be written as
\begin{displaymath}
  \sum_{1\le i_1\neq i_2 \neq \ldots \neq i_d \le N} \Big(h^{(l)}_{i_1,\ldots,i_d}(X_{i_1}\ldots,X_{i_d})\Big)_{l=n}^m
\end{displaymath}
where $N = \max_{n\le l \le m} \max_{0\le k \le d} m_{l,k}$ (here and in what follows the notation $i_1\neq \ldots\neq i_d$ denotes the condition that the indices $i_j$ are \emph{pairwise} distinct).

Let now $\X^{(i)} = (X^{(i)}_n)_{n=0}^\infty$, $i \in [d]$ be i.i.d. copies of the sequence $\X$ and define $Z^{dec}_l$, the decoupled version of $Z_l$ as
\begin{align*}
  Z^{dec}_l &:= \sum_{1\le i_1\neq i_2 \neq \ldots \neq i_d \le N} h^{(l)}_{i_1,\ldots,i_d}(X^{(1)}_{i_1}\ldots,X^{(d)}_{i_d})\\
  & = \sum_{k=0}^d \frac{1}{\binom{d}{k}} \sum_{1\le r_1<\ldots< r_k\le d} \sum_{i_1,\ldots,i_k = 1}^\infty a^{(l,k)}_{i_1,\ldots,i_k} X^{(r_1)}_{i_1}\cdots X^{(r_k)}_{i_k},
\end{align*}
where the equality follows from the symmetry of the tensors $a^{(l,k)}$, and the fact that they have vanishing diagonals and finite support.

For every permutation $\pi$ of the set $[d]$ we have $h^{(l)}_{i_{\pi(1)},\ldots,i_{\pi(d)}}(x_{\pi(1)},\ldots,x_{\pi(d)}) = h^{(l)}_{i_1,\ldots,i_d}(x_{i_1},\ldots,x_{i_d})$, so by Theorem
\ref{thm:decoupling} from the Appendix, applied to the space $F = E^{\{n,\ldots,m\}}$ equipped with the norm $\|(x_n,\ldots,x_m)\| = \max_{n\le l \le m}\|x_i\|$, we have for every $\varepsilon > 0$,
\begin{displaymath}
  \frac{1}{C_d} \p(\max_{n \le l \le m} \|Z_{l}\| \ge C_d \varepsilon ) \le \p(\max_{n\le l \le m} \|Z_{l}^{dec}\| \ge \varepsilon) \le C_d\p(\max_{n \le l \le m} \|Z_{l}\| \ge  \varepsilon/C_d )
\end{displaymath}
Combining this with \eqref{eq:as-finite-dimensional} we obtain that $Z^{(dec)}_n \stackrel{a.s.}{\to} 0$ as $n \to \infty$.

Consider a sequence $\Y = (Y_n)_{n=0}^\infty$ defined as
\begin{displaymath}
  Y_{kd+r} = X^{(r+1)}_k
\end{displaymath}
for $k\in \N$, $r\in \{0,\ldots,d-1\}$, and any sequences $a\un = (a\un_0,\ldots,a\un_{k_n}) \in \R^{k_n+1}$, $b_n \in \R$, $n\in \N$, such that
$b_n + \sum_{k=0}^{k_n} a\un_k Y_k \to 0$ almost surely. Using the Fubibi theorem and applying successively \eqref{eq:1d-implication} to $X^{(r)}$ ($r \in [d]$) conditionally on $X^{(l)}$, $l\in [d]\setminus \{r\}$, we easily obtain that $b_n \to 0$ as $n \to \infty$. Taking into account Remark \ref{re:1d-prob-as} we can infer that $\Y$ satisfies the implication \eqref{eq:1d-implication}.

Now, for fixed $r \in [d]$, applying this implication to $(X^{(r)}_n)_{n\in \N}$ and $Z^{dec}_n$, conditionally on $\{X_i^{(r)}\}_{n\in \N,r\in [d]\setminus \{r\}}$ we obtain via the Fubini theorem, that
\begin{displaymath}
  \sum_{k=0}^{d-1} \frac{1}{\binom{d}{k}} \sum_{{1\le r_1<\ldots< r_k\le d}\atop{\forall_i r_i \neq r}} \sum_{i_1,\ldots,i_k = 1}^\infty a^{(n,k)}_{i_1,\ldots,i_k} X^{(r_1)}_{i_1}\cdots X^{(r_k)}_{i_k} \stackrel{a.s.}{\to} 0.
\end{displaymath}
The induction hypothesis applied to $\Y$ implies now that for each $r \in [d]$ and each $k\le d-1$
\begin{displaymath}
  \sum_{{1\le r_1<\ldots< r_k\le d}\atop{\forall_i r_i \neq r}} \sum_{i_1,\ldots,i_k = 1}^\infty a^{(n,k)}_{i_1,\ldots,i_k} X^{(r_1)}_{i_1}\cdots X^{(r_k)}_{i_k} \stackrel{a.s.}{\to} 0
\end{displaymath}
(we use here that every tetrahedral homogeneous polynomial can represented in the form \eqref{eq:finite-sums}).

Now we get
\begin{displaymath}
  \sum_{{1\le r_1<\ldots< r_k\le d}} \sum_{i_1,\ldots,i_k = 1}^\infty a^{(n,k)}_{i_1,\ldots,i_k} X^{(r_1)}_{i_1}\cdots X^{(r_k)}_{i_k}  = \frac{1}{d-k}\sum_{r = 1}^d \sum_{{1\le r_1<\ldots< r_k\le d}\atop{\forall_i r_i \neq r}} \sum_{i_1,\ldots,i_k = 1}^\infty a^{(n,k)}_{i_1,\ldots,i_k} X^{(r_1)}_{i_1}\cdots X^{(r_k)}_{i_k} \stackrel{a.s.}{\to} 0,
\end{displaymath}
for all $k \le d-1$ and as a consequence
\begin{displaymath}
 Z^{dec}_{n,d} := \sum_{i_1,\ldots,i_d=1}^\infty  a^{(n,d)}_{i_1,\ldots,i_d}X^{(1)}_{i_1}\cdots X^{(d)}_{i_d} \stackrel{a.s.}{\to 0}.
\end{displaymath}
Applying now again the decoupling inequality, we conclude that for all $\varepsilon > 0$,
\begin{displaymath}
  \limsup_{n\to \infty} \sup_{m \ge n} \p(\max_{n \le l \le m} \|Z_{l,d}\| > \varepsilon) \le C\lim_{n\to \infty} \sup_{m \ge n} \p(\max_{n \le l \le m} \|Z^{dec}_{l,d}\| > \varepsilon/C) = 0,
\end{displaymath}
i.e., $Z_{n,d} \stackrel{a.s.}{\to} 0$. From this we obtain $\sum_{k=0}^{d-1} Z_{n,k} \stackrel{a.s.}{\to 0}$, which by another application of the induction hypothesis implies that $Z_{n,k} \stackrel{a.s.}{\to} 0$ also for all $k < d$, and ends the induction step.
\end{proof}

\subsection{Proofs of results from Section \ref{sec:independent-main}}\label{sec:proofs-indep}

We will use the following proposition, characterizing convergence in probability to a constant for row sums of a triangular array  of independent random variables. Let us remark that with some not difficult but technical calculations  it can be obtained from a much deeper result, namely \cite[Chapter IV, Theorem 3]{MR0388499}, characterizing weak convergence of such sums to an arbitrary infinitely divisible distribution. However, to make the presentation more self contained and elementary, we provide a direct proof in Appendix \ref{app:LLN}.

\begin{prop}\label{prop:convergence-to-one}Let $X_{n,k}$, $n\in \N, k\in \{0,\ldots,k_n\}$ be a trianglar array of random variables such that for each $n$, $X_{n,0},\ldots,X_{n,k_n}$ are independent.
Assume that for all $\varepsilon > 0$,
\begin{align}\label{eq:asymptotic-smallness}
  \max_{0\le k\le k_n} \p(|X_{n,k}|\ge \varepsilon) \to 0
\end{align}
as $n \to \infty$. Let $\tau$ be an arbitrary positive number. Then $\sum_{k=0}^{k_n} X_{n,k}$ converges in probability to $1$ if and only if
\begin{itemize}
\item[(i)]
\begin{align}\label{eq:1st-condition}
\sum_{k=0}^{k_n} \E X_{n,k}\Ind{|X_{n,k}| \le \tau} \to 1
\end{align}

and

\item[(ii)]
\begin{align}\label{eq:2nd-condition}
\sum_{k=0}^{k_n} \Big(\p(|X_{n,k}| > \tau) + \Var\Big(X_{n,k}\Ind{|X_{n,k}|\le \tau}\Big)\Big) \to 0
\end{align}
as $n \to \infty$.
\end{itemize}

\end{prop}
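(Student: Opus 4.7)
\emph{Plan.} For sufficiency, set $Y_{n,k}:=X_{n,k}\Ind{|X_{n,k}|\le\tau}$. The union bound combined with $\sum_k\p(|X_{n,k}|>\tau)\to 0$ from (ii) gives $\p(\sum_k X_{n,k}\neq\sum_k Y_{n,k})\to 0$; Chebyshev's inequality combined with $\sum_k\Var Y_{n,k}\to 0$ gives $\sum_k(Y_{n,k}-\E Y_{n,k})\to 0$ in probability; and (i) then forces $\sum_k X_{n,k}\to 1$ in probability. The entire direction is thus a routine truncation-plus-Chebyshev calculation.

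For the converse I would follow the classical symmetrization strategy. Introducing independent copies $X'_{n,k}$ of $X_{n,k}$ on an extended probability space, the symmetric array $X^s_{n,k}:=X_{n,k}-X'_{n,k}$ satisfies $\sum_k X^s_{n,k}\to 0$ in probability. L\'evy's inequality for symmetric independent partial sums, combined with the deterministic bound $\max_k|X^s_{n,k}|\le 2\max_j|\sum_{k\le j}X^s_{n,k}|$, produces $\p(\max_k|X^s_{n,k}|>2\tau)\to 0$; by independence this probability equals $1-\prod_k(1-\p(|X^s_{n,k}|>2\tau))$, and the elementary inequality $-\log(1-p)\ge p$ converts its vanishing into $\sum_k\p(|X^s_{n,k}|>\tau)\to 0$ for every $\tau>0$. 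Coupling this with the standard symmetrization inequality $\p(|X_{n,k}-X'_{n,k}|>\tau)\ge\frac12\p(|X_{n,k}-m_{n,k}|>\tau)$ (for $m_{n,k}$ a median of $X_{n,k}$) and with the elementary consequence $\max_k|m_{n,k}|\to 0$ of the infinitesimality assumption \eqref{eq:asymptotic-smallness} yields the first half of (ii): $\sum_k\p(|X_{n,k}|>\tau)\to 0$.

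With the tails controlled, I would put $Y_{n,k}=X_{n,k}\Ind{|X_{n,k}|\le\tau}$; the sufficiency bookkeeping now shows $\sum_k Y_{n,k}\to 1$ in probability, so the symmetric bounded array $W_{n,k}:=Y_{n,k}-Y'_{n,k}$ (with $|W_{n,k}|\le 2\tau$) satisfies $\sum_k W_{n,k}\to 0$. For $t>0$ with $2t\tau<\pi/2$ the characteristic functions $\phi_{n,k}(t):=\E\cos(tW_{n,k})$ lie in $(0,1]$; taking the logarithm of the convergent product $\prod_k\phi_{n,k}(t)\to 1$ and invoking $-\log(1-u)\ge u$ once more delivers $\sum_k\E(1-\cos(tW_{n,k}))\to 0$, and shrinking $t$ so that also $1-\cos(tx)\ge t^2x^2/4$ for $|x|\le 2\tau$ translates this into $\sum_k\E W_{n,k}^2\to 0$, that is, $\sum_k\Var Y_{n,k}\to 0$, completing (ii). A final application of Chebyshev to $\sum_k(Y_{n,k}-\E Y_{n,k})$ together with $\sum_k Y_{n,k}\to 1$ then forces $\sum_k\E Y_{n,k}\to 1$, which is (i).

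The step I expect to be the most delicate is this last characteristic-function computation that promotes the scalar convergence $\sum_k W_{n,k}\to 0$ into a summable variance bound: it must dovetail infinitesimality (to linearise the logarithm of the product), symmetry (to keep every $\phi_{n,k}(t)$ real and nonnegative), and the truncation $|W_{n,k}|\le 2\tau$ (needed to make $1-\cos(tx)\gtrsim t^2x^2$ effective). The symmetrization ``untwist'' from $\sum_k\p(|X^s_{n,k}|>\tau)\to 0$ back to $\sum_k\p(|X_{n,k}|>\tau)\to 0$ is standard but not completely automatic, depending crucially on the uniform smallness of the medians---itself an easy consequence of \eqref{eq:asymptotic-smallness}.
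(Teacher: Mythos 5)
Your proof is correct, and while the easy (sufficiency) direction coincides with the paper's, your necessity argument follows a genuinely different, more ``classical Gnedenko--Kolmogorov'' route. For the tail sums $\sum_k\p(|X_{n,k}|>\tau)$, the paper works with the Rademacher randomization $\sum_k\varepsilon_k|X_{n,k}-X'_{n,k}|$ and a first-entrance decomposition of the event $\{\max_k|X_{n,k}|>\tau\}$, using \eqref{eq:asymptotic-smallness} only to guarantee $\p(|X_{n,k}-X'_{n,k}|\ge\tau/2\mid\X)\ge 1/2$ on that event; this bounds $\p(\max_k|X_{n,k}|>\tau)$ directly, with no medians. You instead pass through L\'evy's maximal inequality and the exact product formula for $\p(\max_k|X^s_{n,k}|>2\tau)$, then de-symmetrize via the weak symmetrization inequality, which is where \eqref{eq:asymptotic-smallness} enters for you (through $\max_k|m_{n,k}|\to0$). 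For the variances, the paper bounds $\E\widetilde S_n^4\le 4\tau^2\E\widetilde S_n^2+3(\E\widetilde S_n^2)^2$ and applies Paley--Zygmund to force $\E\widetilde S_n^2\to0$, whereas you take logarithms of the product of the (real, positive, by symmetry and the truncation $|W_{n,k}|\le2\tau$ with $2t\tau<\pi/2$) cosine characteristic functions and use $1-\cos y\ge y^2/4$ on that range. Both of your delicate steps check out: the passage from $\prod_k\phi_{n,k}(t)\to1$ to $\sum_k(1-\phi_{n,k}(t))\to0$ is legitimate since every factor lies in $(0,1]$, and the median step is as routine as you say. The paper's route stays entirely in real-variable territory and is arguably more self-contained; yours avoids fourth-moment bookkeeping and Paley--Zygmund at the cost of invoking L\'evy's inequality and characteristic functions.
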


We are now ready for the proof of Theorem \ref{thm:general-sequences}.

\begin{proof}[Proof of Theorem \ref{thm:general-sequences}]
By Lemma \ref{le:reduction-to-d=1} it is enough to verify that under the assumptions of Theorem  \ref{thm:general-sequences} the implication \eqref{eq:1d-implication} holds. We will proceed by contradiction. Assume thus that there are sequences $a\un = (a\un_0,\ldots,a\un_{k_n}) \in \R^{k_n+1}$, $b_n \in \R$, $n\in \N$, such that $b_n + \sum_{k=0}^{k_n} a\un_k X_k \stackrel{\p, n\to \infty}{\to}0$ but $b_n$ does not converge to 0. By passing to a subsequence we can further assume that $b_n$'s are separated from zero. Dividing by $b_n$ and setting $t_{n,k} = -a_{n,k}/b_{n}$ we thus obtain a sequence $t_n = (t_{n,0},\ldots,t_{n,k_n})$ such that
\begin{displaymath}
  \sum_{k=0}^{k_n} t_{n,k}X_k \stackrel{\p}{\to} 1.
\end{displaymath}
Let $\X' = (X_n')_{n=0}^\infty$ be a independent copy of $\X$. We have
\begin{displaymath}
  \sum_{k=0}^{k_n} t_{n,k}(X_k - X_k') \stackrel{\p}{\to} 0.
\end{displaymath}
By assumption \eqref{eq:anti-concentration} and the Fubini Theorem we obtain for all $k$,
\begin{displaymath}
  \p(|X_k - X_k'| \ge \delta) \ge \delta.
\end{displaymath}
On the other hand, by symmetry of $X_k-X_k'$, for any $\varepsilon > 0$,
\begin{displaymath}
  \p(|t_{n,k}|\cdot|X_k - X_k'| \ge \varepsilon) \le 2\p\Big(\Big|\sum_{k=0}^{k_n} t_{n,k}(X_k - X_k')\Big| \ge \varepsilon\Big) \to 0,
\end{displaymath}
which shows that $t_{n,k}$ converge with $n$ to zero, uniformly in $k\in \N$. Together with tightness this implies that the triangular array given by $X_{n,k} = t_{n,k}X_k$ satisfies \eqref{eq:asymptotic-smallness}.
As a consequence, by Proposition \ref{prop:convergence-to-one} we obtain
\begin{displaymath}
  A_n := \sum_{{1\le k\le k_n}\atop{t_{n,k}\neq 0}}  t_{n,k} \E X_k\Ind{|X_k|\le \frac{1}{|t_{kn}|}} \to 1
\end{displaymath}
and
\begin{displaymath}
  B_n := \sum_{{1\le k \le k_n}\atop{t_{n,k}\neq 0}} \Big(\p\Big(|X_k| > \frac{1}{|t_{n,k}|}\Big) + t_{n,k}^2\Var\Big(X_k\Ind{|X_{n,k}\le \frac{1}{|t_{n,k}|}}\Big)\Big) \to 0,
\end{displaymath}
which is however impossible, since by \eqref{eq:non-iid}, for $n$ large enough, we have $|A_n| \le C B_n$.
This ends the proof of the theorem.
\end{proof}

\begin{remark}
Let us note that using the characterization of the weak law of large numbers for triangular arrays given in \cite[Chapter IX, Theorem 1]{MR0388499}, one can obtain the following version of Theorem \ref{thm:general-sequences}, without assuming the anti-concentration condition \eqref{eq:anti-concentration}. Since the replacement for condition
\eqref{eq:non-iid} in this result is more involved, and Theorem \ref{thm:general-sequences} is sufficient for the main situations of interest (in particular for the case of centered square uniformly integrable variables of variance one covered by Corollary \ref{cor:uniformly-integrable}, and for the i.i.d. case treated in Theorem \ref{thm:iid}) we will not provide a detailed proof.
\end{remark}

\begin{theorem}
  Assume that there exists a constant $C\ge 0$ such that for all $t > 0$, and all $n \in \N$,
  \begin{multline*}
    |\Med(X_n) + \E (X_n-\Med(X_n))\Ind{|X_n-\Med(X_n)| \le \frac{1}{t}}| \\
    \le C\Big(\frac{1}{t}\p\Big(|X_n| > \frac{1}{t}\Big) + t \E |X_n - \Med(X_n)|^2\Ind{|X_n - \Med(X_n)| \le \frac{1}{t}}\Big).
  \end{multline*}
Then $\X$ satisfies the CDP.
\end{theorem}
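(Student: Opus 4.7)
The plan is to adapt the proof of Theorem~\ref{thm:general-sequences}, centering at the median instead of at zero so as to bypass the anti-concentration condition \eqref{eq:anti-concentration}, and applying the classical Kolmogorov criterion for degenerate convergence of sums of independent random variables to a (possibly varying) constant, e.g.\ the version in \cite[Chapter IX, Theorem 1]{MR0388499} which specialises to Proposition~\ref{prop:convergence-to-one} in the case of convergence to $1$.

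After the usual WLOG reduction to a tight $\X$ and the appeal to Lemma~\ref{le:reduction-to-d=1}, one argues by contradiction as in the proof of Theorem~\ref{thm:general-sequences}: one obtains coefficients $t_{n,k}$ such that $\sum_{k=0}^{k_n} t_{n,k}X_k \stackrel{\p}{\to} 1$. Writing $m_k = \Med(X_k)$, $Y_k = X_k - m_k$, $Y_{n,k} = t_{n,k}Y_k$ and $M_n = \sum_k t_{n,k}m_k$, we have $\sum_k Y_{n,k} - (1-M_n) \stackrel{\p}{\to} 0$, and each $Y_{n,k}$ has median zero. Symmetrising with an independent copy $\X'$ of $\X$ and using L\'evy's maximal inequality on $\sum_k t_{n,k}(X_k-X_k')$ gives $\max_k |t_{n,k}(X_k-X_k')| \stackrel{\p}{\to} 0$, and the elementary bound $\p(|Y_k|>\tau) \le 2\p(|X_k-X_k'|>\tau)$ transfers this to the infinitesimality of the triangular array $\{Y_{n,k}\}$. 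Tightness, which gives $\sup_k |m_k|<\infty$, then implies also $\max_k |t_{n,k}|\to 0$ and $\max_k |t_{n,k}m_k|\to 0$.

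Applying the Kolmogorov criterion to the infinitesimal array $\{Y_{n,k}\}$ with centering sequence $1-M_n$ yields, for $\tau = 1$,
\begin{align*}
\sum_k \p(|Y_{n,k}|>1) \to 0, \quad \sum_k \Var\!\bigl(Y_{n,k}\Ind{|Y_{n,k}|\le 1}\bigr) \to 0, \quad \sum_k\bigl(t_{n,k}m_k + \E Y_{n,k}\Ind{|Y_{n,k}|\le 1}\bigr) \to 1.
\end{align*}
Substituting $t=|t_{n,k}|$ in the hypothesis of the theorem and summing, the left-hand side produces, by the triangle inequality, a quantity tending to $1$, while the right-hand side is controlled by $\sum_k \p(|X_k|>1/|t_{n,k}|)$ (which, thanks to $\max_k |t_{n,k}m_k|\to 0$, is bounded by $\sum_k \p(|Y_{n,k}|>1/2) \to 0$) and by $\sum_k \E Y_{n,k}^2\Ind{|Y_{n,k}|\le 1}$, which we must show also tends to zero in order to close the contradiction.

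The main obstacle is this last quantity: the Kolmogorov criterion directly controls only the \emph{variance} of the truncated variables, whereas the theorem's hypothesis involves their \emph{second moment}, and for median-zero variables these two quantities are not automatically comparable. Closing this gap requires a bootstrap argument---using the hypothesis once again to bound $|\E Y_{n,k}\Ind{|Y_{n,k}|\le 1}|$ by a combination of $|t_{n,k}m_k|$ and the already controlled quantities, and then summing $(\E Y_{n,k}\Ind{|Y_{n,k}|\le 1})^2$ with the help of $\max_k |t_{n,k}m_k|\to 0$---which is where the technical weight of the proof lies. Apart from this, the argument is a direct adaptation of the proof of Theorem~\ref{thm:general-sequences}.
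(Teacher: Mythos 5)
The paper deliberately offers no proof of this theorem: the remark immediately preceding it says a detailed proof will not be provided and only points to the degenerate-convergence criterion for triangular arrays. So your argument has to be judged on its own merits. Your outline is the right one and is essentially what that remark intends: reduce to \eqref{eq:1d-implication} via Lemma \ref{le:reduction-to-d=1}, pass by contradiction to $\sum_k t_{n,k}X_k\stackrel{\p}{\to}1$, and replace the anti-concentration condition \eqref{eq:anti-concentration} by median centering, so that infinitesimality of the array comes from the symmetrization inequality $\p(|X_k-\Med(X_k)|>\tau)\le 2\p(|X_k-X_k'|>\tau)$ combined with L\'evy's inequality. Two corrections, though. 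First, $\max_k|t_{n,k}|\to 0$ does not follow from anything you have established (take $X_k$ with $\p(X_k=0)=1-2^{-k}$: then $t_{n,k}(X_k-X_k')$ is small in probability for large $k$ no matter how large $t_{n,k}$ is); fortunately you never actually need this claim. Second, all of the median bookkeeping can be discarded: since the hypothesis is required for \emph{all} $t>0$, letting $t\to\infty$ makes the right-hand side $O(1/t)$ while the left-hand side tends to $|\Med(X_n)|$, so the hypothesis forces $\Med(X_n)=0$ for every $n$. Hence $Y_k=X_k$, $M_n=0$, and Proposition \ref{prop:convergence-to-one} applies verbatim with $\tau=1$ to the (now provably infinitesimal) array $X_{n,k}=t_{n,k}X_k$ --- this is exactly how the median hypothesis substitutes for \eqref{eq:anti-concentration}, which in the proof of Theorem \ref{thm:general-sequences} served only to establish infinitesimality.

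The one genuine gap is the variance-versus-second-moment issue you flag at the end; it must actually be closed, and it does close as follows. Write $\mu_{n,k}=\E X_{n,k}\Ind{|X_{n,k}|\le 1}$, $p_{n,k}=\p(|X_{n,k}|>1)$ and $v_{n,k}=\Var(X_{n,k}\Ind{|X_{n,k}|\le 1})$. The hypothesis at $t=|t_{n,k}|$, multiplied by $|t_{n,k}|$, reads $|\mu_{n,k}|\le C(p_{n,k}+v_{n,k}+\mu_{n,k}^2)$. Infinitesimality gives $|\mu_{n,k}|\le\varepsilon+\max_j\p(|X_{n,j}|>\varepsilon)$ for every $\varepsilon\in(0,1)$, hence $\max_k|\mu_{n,k}|\to 0$; once $C\max_k|\mu_{n,k}|\le 1/2$ the quadratic term can be absorbed into the left-hand side, yielding $|\mu_{n,k}|\le 2C(p_{n,k}+v_{n,k})$ and therefore $\sum_k|\mu_{n,k}|\le 2C\sum_k(p_{n,k}+v_{n,k})\to 0$ by condition (ii) of Proposition \ref{prop:convergence-to-one}. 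This contradicts condition (i), $\sum_k\mu_{n,k}\to 1$, and finishes the proof. This is the bootstrap you allude to, but note that the quantity driving it is $\max_k|\mu_{n,k}|\to 0$, a consequence of infinitesimality, rather than $\max_k|t_{n,k}m_k|\to 0$ as you suggest (the latter is identically zero anyway once one knows the medians vanish). With this step written out, your argument is complete.
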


Let us now prove Corollary \ref{cor:uniformly-integrable}.

\begin{proof}[Proof of Corollary \ref{cor:uniformly-integrable}]
  Let us first prove the condition \eqref{eq:non-iid}. Let $t_0$ be such that for all $n\in \N$, $\E X_n^2\Ind{|X_n| \le \frac{1}{t_0}} \ge 1/2$.
  Using the mean zero assumption, for $t < t_0$ we can estimate
  \begin{multline*}
  |\E X_n \Ind{|X_n| \le \frac{1}{t}}| = |\E X_n \Ind{|X_n| > \frac{1}{t}}| \le t\E X_n^2 = t \\
  \le 2t \E X_n^2\Ind{|X_n| \le \frac{1}{t}} = 2t \Var\Big(X_n\Ind{|X_n| \le \frac{1}{t}}\Big) + 2t\Big(\E X_n\Ind{|X_n| > \frac{1}{t}}\Big)^2.
  \end{multline*}
Now, by the Schwarz inequality, the second term on the right-hand side above is bounded by
\begin{displaymath}
  2t \E X_n^2 \p\Big(|X_n| > \frac{1}{t}\Big) \le 2t_0^2 \frac{1}{t}\p\Big(|X_n| > \frac{1}{t}\Big),
\end{displaymath}
which shows that \eqref{eq:non-iid} holds with $C = \max(2,2t_0^2)$.

Tightness of the sequence $\X$ follows from uniform integrability, so to finish the proof it remains to demonstrate the condition \eqref{eq:anti-concentration}. This will follow by uniform integrability and a Paley-Zygmund type argument.

By the de la Vall\'ee Poussin theorem, there exists a convex, nondecreasing function $\varphi\colon [0,\infty) \to [0,\infty)$ such that $\varphi(0) = 0$, $\lim_{x\to \infty} \varphi(x)/x = \infty$ and $M:= \sup_{n\in \N} \E \varphi(|X_n|^2) < \infty$. Consider any $x \in [-2,2]$.
By convexity
\begin{displaymath}
  \E\varphi\Big(\frac{(X_n-x)^2}{4}\Big) \le \frac{1}{2}(\E \varphi(|X_n|^2) + \varphi(4)) \le \frac{1}{2}(M + \varphi(4))
\end{displaymath}
Therefore, again by convexity, there exists $K$ such that for all $n\in \N$ and $x \in [-2,2]$,
\begin{displaymath}
  \E\varphi\Big(\frac{(X_n-x)^2}{K}\Big) \le \frac{1}{4}.
\end{displaymath}

On the other hand $\E (X_n- x)^2 = \E X_n^2 + x^2 \ge 1$.
Denoting by $\varphi^\ast$ the Legendre transform of $\varphi$, given by the formula $\varphi^\ast(x) = \sup_{y\ge 0}(xy - \varphi(y))$, we can estimate
\begin{displaymath}
\frac{3}{4} \le \E (X_n-x)^2\Ind{|X_n-x| > \frac{1}{2}} \le \E\varphi\Big(\frac{(X_n-x)^2}{K}\Big) + \varphi^\ast(K)\p\Big(|X_n-x| \ge \frac{1}{2}\Big),
\end{displaymath}
which together with the definition of $K$ yields
\begin{displaymath}
  \p\Big(|X_n-x| \ge \frac{1}{2}\Big) \ge \frac{1}{2\varphi^\ast(K)}
\end{displaymath}
for all $x\in [-2,2]$.
For $|x|> 2$, by Chebyshev's inequality we have $\p(|X-x| \le 1/2) \le \p(|X| \ge 3/2) \le \frac{4}{9}$.

Combining the last two estimates we obtain \eqref{eq:anti-concentration} with $\delta = 2^{-1}\min(1,1/\varphi^\ast(K))$, which ends the proof of the corollary.
\end{proof}

We will conclude this section by proving the characterizations of the CDP (Theorem \ref{thm:iid}) and the corresponding reverse triangle inequality (Corollary \ref{cor:anti-triangle}) in the i.i.d. case.

\begin{proof}[Proof of Theorem \ref{thm:iid}]
One can easily check that the equivalence between the CDP and condition \eqref{eq:iff-iid} holds in the case of almost surely constant variable $X_0$ (both conditions are satisfied if and only if $X_0$ vanishes almost surely), therefore from now on we will assume that $X_0$ is not deterministic. We will first prove that \eqref{eq:iff-iid} implies the CDP. To this end we will use Theorem \ref{thm:general-sequences}. The condition \eqref{eq:non-iid} in the i.i.d. case clearly reduces to \eqref{eq:iff-iid}, tightness of $\X$ is obvious, and the condition \eqref{eq:anti-concentration} follows easily from the assumption that $X_0$ is not deterministic. Indeed, for any pair of sequences $x_n \in \R$ and $\delta_n \to 0$, such that $\p(X_0 \in (x_n-\delta_n,x_n+\delta_n)) \ge 1 - \delta_n$, the sequence $x_n$ must be bounded, and thus passing to a convergent subsequence we would obtain that $X_0$ is deterministic. Thus, as all the assumptions of Theorem \ref{thm:general-sequences} hold, we can conclude that $\X$ satisfies the CDP.

Let us now prove the converse implication. Assume that \eqref{eq:iff-iid} is not satisfied. Thus there exists a sequence of positive numbers $t_n$, such that $t_n \to 0$ and
\begin{displaymath}
  t_n \Big|\E X_0 \Ind{|X_0|\le \frac{1}{t_n}}\Big| > n \Big(\p\Big(|X_0| > \frac{1}{t_n}\Big) + t_n^2 \Var\Big(X_0\Ind{|X_0|\le \frac{1}{t_n}}\Big)\Big).
\end{displaymath}
Set $a_n = t_n$ if $\E X_0\Ind{|X_0|\le \frac{1}{t_n}} > 0$ and $a_n = -t_n$ otherwise. Define moreover
\begin{displaymath}
k_n = \Big\lfloor \Big(t_n \Big|\E X_0 \Ind{|X_0|\le \frac{1}{t_n}}\Big|\Big)^{-1}\Big\rfloor - 1.
\end{displaymath}
Note that by the Lebesgue dominated convergence theorem
\begin{align}\label{eq:convergence-to-zero}
t_n \Big|\E X_0 \Ind{|X_0|\le \frac{1}{t_n}}\Big| \to 0
\end{align}
and so $k_n \to \infty$.

Now consider the sequence $Z_n = \sum_{k=0}^{k_n} a_n X_k = \sum_{k=0}^{k_n} X_{n,k}$, where $X_{n,k} = a_n X_k$. Since $a_n \to 0$ and $X_n$ have the same distribution, the condition \eqref{eq:asymptotic-smallness} is satisfied. Using the definition of $k_n$ and \eqref{eq:convergence-to-zero} we get
\begin{displaymath}
  \sum_{k=0}^{k_n} \E X_{n,k}\Ind{|X_{n,k}| \le 1} = (k_n +1)t_n \Big|\E X_0 \Ind{|X_0|\le \frac{1}{t_n}}\Big| \to 1
\end{displaymath}
as $n \to \infty$, which yields \eqref{eq:1st-condition} of Proposition \ref{prop:convergence-to-one}.

Moreover,
\begin{align*}
  \sum_{k=0}^{k_n} \Big( \p(|X_{n,k}| > 1) + \Var(X_{n,k}\Ind{|X_{n,k}| \le 1})\Big) &= (k_n+1) \Big(\p\Big(|X_0| > \frac{1}{t_n}\Big) + t_n^2 \Var\Big(X_0\Ind{|X_0|\le \frac{1}{t_n}}\Big)\Big)\\
  &\le \frac{\p\Big(|X_0| > \frac{1}{t_n}\Big) + t_n^2 \Var\Big(X_0\Ind{|X_0|\le \frac{1}{t_n}}\Big)}{t_n \Big|\E X_0 \Ind{|X_0|\le \frac{1}{t_n}}\Big|} < \frac{1}{n}
\end{align*}
and so \eqref{eq:2nd-condition} is also satisfied.

Thus, by Proposition \ref{prop:convergence-to-one} we obtain that  $\sum_{k=0}^{k_n} a_n X_n \to 1$ in probability. Passing to a subsequence we can upgrade this to the almost sure convergence, which implies that the CDP cannot hold.
\end{proof}

\begin{proof}[Proof of Corollary \ref{cor:anti-triangle}]
To prove the first part of the corollary we will proceed by contradiction, constructing a sequence of polynomials with coefficients in $c_0$, which violate the assertion of Proposition \ref{prop:Banach}. Let us thus assume that \eqref{eq:iff-iid} holds but \eqref{eq:anti-triangle-L0} is violated. Then there exist $d$, $k \le d$, a sequence of Banach spaces $E_n$, $t_n > 0$ and $Z_{n,i} \in Q_i(\X,E_n)$ ($i \le d$), such that
\begin{displaymath}
  \p(\|Z_{n,k}\| \ge 2t_n) > 4n^2 \p\Big(\|Z_{n,0}+\ldots+Z_{n,d}\| \ge \frac{t_n}{2n^2}\Big),
\end{displaymath}
(for simplicity we will denote all the norms appearing in the proof by $\|\cdot\|$). Scaling $Z_{n,k}$ if necessary, we can assume that $t_n = 1$. By approximation we obtain homogeneous tetrahedral forms (in particular depending on a finite number of variables) $Z'_{n,i}$ of degree $i$ ($i\le d$), such that
\begin{displaymath}
  \p(\|Z'_{n,k}\| \ge 3/2) > 4n^2 \p\Big(\|Z'_{n,0}+\ldots+Z'_{n,d}\| \ge \frac{2}{3n^2}\Big).
\end{displaymath}
Passing to subspaces spanned by coefficients of $Z'_{n,i}$ we may further assume that all spaces $E_n$ are finite dimensional, which by a standard embedding gives a sequence $N_n$ of positive integers, and tetrahedral forms $Z_{n,k}''$  with  values in $\ell_\infty^{N_n}$ such that
\begin{displaymath}
  \p(\|Z''_{n,k}\| \ge 1) > n^2 \p\Big(\|Z''_{n,0}+\ldots+Z''_{n,d}\| \ge \frac{1}{n^2}\Big).
\end{displaymath}

Let now $m_n = \lceil 1/\p(\|Z''_{n,k}\| \ge 1) \rceil$. Since $Z''_{n,i}$ depend only on finitely many variables $X_n$, using the sequence $\X$ we can construct i.i.d. copies
$(Z''_{n,1}(j),\ldots,Z''_{n,d}(j))$, $j = 1,\ldots,m_n$ of the vectors $(Z''_{n,1},\ldots,Z''_{n,d})$. Then $\widehat{Z}_{n,i} := (Z''_{n,i}(j))_{j=1}^{m_n}$ may be considered a tetrahedral homogeneous polynomial of degree $i$ with coefficients in $\ell_{\infty}^{N_nm_n}$ embedded in $c_0$ in a natural way. Recall also the following elementary inequality for independent random variables $\xi_i$:
\begin{displaymath}
  \frac{1}{2}\min\Big(\sum_j \p(\xi_j > t), 1\Big) \le \p(\max_j \xi_j > t) \le \sum_{j} \p(\xi_j > t).
\end{displaymath}
Using this inequality together with independence over $j = 1,\ldots,m_n$ we obtain
\begin{align}\label{eq:contradiction-yo}
\p(\|\widehat{Z}_{n,k}\| \ge 1) = \p(\max_{j\le m_n} \|Z_{n,k}''(j)\|\ge 1) \ge \frac{1}{2} \min\Big(m_n \p(\|Z_{n,k}''\| \ge 1),1\Big) = 1/2
\end{align}
and

\begin{align*}
  \p\Big(\|\widehat{Z}_{n,0} + \ldots+\widehat{Z}_{n,d}\| \ge \frac{1}{n^2}\Big) &= \p\Big(\max_{j\le m_n} \|Z_{n,0}''(j) + \ldots+Z_{n,d}''(j)\| \ge \frac{1}{n^2}\Big)\\
  & \le m_n \p\Big(\|Z''_{n,0} + \ldots+Z''_{n,d}\| \ge \frac{1}{n^2}\Big) \\
  &\le m_n \frac{1}{n^2} \p(\|Z''_{n,k}\| \ge 1) \le \frac{2}{n^2}.
\end{align*}
Thus, by the Borel-Cantelli lemma, the sequence $\widehat{Z}_{n} = \widehat{Z}_{n,0} + \ldots+\widehat{Z}_{n,d}$ of $c_0$ valued tetrahedral polynomials converges almost surely to $0$, while by \eqref{eq:contradiction-yo}, $\widehat{Z}_{n,k}$ does not. By Proposition \ref{prop:Banach} this shows that $\X$ does not have the CDP, which by Theorem \ref{thm:iid} contradicts \eqref{eq:iff-iid} and finishes the proof of the first part of the corollary.

As for the second part, if \eqref{eq:anti-triangle-L0} is satisfied for $d=1$ and $E = \R$, then clearly the implication \eqref{eq:1d-implication} holds and thus, by Lemma \ref{le:reduction-to-d=1}, $\X$ satisfies the CDP. By Theorem \ref{thm:iid} this implies that \eqref{eq:iff-iid} is satisfied.
\end{proof}

\section{Proofs of results for Poisson stochastic integrals}\label{sec:Poisson}
In this section we will prove Theorem \ref{thm:Poisson-ui}. The basic proof we will provide is again based on decoupling inequalities. After completing the argument we will also present a sketch of the proof based on Mehler's formula for the Poisson process. We choose to focus on the decoupling proof since it is a variation on the approach we used for independent random variables and also it seems that its adaptation to more general situations (i.e., other random measures) is more straightforward than in the case of Mehler's formula argument.

Let us start by recalling the basic definitions of multiple stochastic integrals with respect to the Poisson process. Clearly we are not able to provide here a complete exposition, so we will just present the basic formulas and constructions necessary for carrying out the proof, and refer the reader to the monograph \cite{MR3791470} for details.

The multiple Wiener-It\^{o} integral $I_n\colon L_{2,s}(\mathcal{X}^n,\lambda^{\otimes n})\to L_2(\Omega,\p)$ is defined first for integrable $f$ with an explicit formula \eqref{eq:Wiener-Ito-integrable} below and then uniquely extended to the space $L_{2,s}(\mathcal{X}^n,\lambda^{\otimes n})$, by a standard density argument, in such a way that $I_n/\sqrt{n!}$ is an isometric embedding. For $f\colon \mathcal{X}^n\to \R$, integrable (not necessarily symmetric or square integrable) one defines
\begin{align}\label{eq:Wiener-Ito-integrable}
I_n(f) = \sum_{J\subset [n]} (-1)^{n-|J|} \int_{\mathcal{X}^{|J|}}f(x_1,\ldots,x_n)\eta^{(|J|)}(dx_J)\lambda^{n-|J|}(dx_{J^c})
\end{align}
where $\eta^{(n)}$ is the $m$-th factorial measure on $\mathcal{X}^m$, defined inductively by $\eta^{(1)}=\eta$,
\begin{displaymath}
  \eta^{(m+1)}(\cdot) = \int_{\mathcal{X}^m}\Big(\int_{\mathcal{X}}\Ind{(x_1,x_2,\ldots,x_{m+1})\in\cdot}\eta(dx_{m+1}) - \sum_{i=1}^m \Ind{(x_1,x_2,\ldots,x_m,x_i)\in\cdot}\Big)\eta^{(m)}(d(x_1,\ldots,x_m)).
\end{displaymath}
If $\eta$ is a proper point process, i.e., if $\eta$ can be represented as a countable sum of Dirac's deltas $\eta = \sum_{i=1}^\kappa \delta_{X_i}$ for some $\N\cup\{\infty\}$-valued random variable $\kappa$ and $\mathcal{X}$-valued random variables $X_i$, then
\begin{displaymath}
  \mu^{(m)} = \sum_{i_1,\ldots,i_m=1}^\kappa \Ind{i_1\neq \ldots\neq i_m}\delta_{(x_{i_1},\ldots,x_{i_k})}.
\end{displaymath}

In particular, if $B_1\ldots,B_n \subset \mathcal{X}$ are pairwise disjoint and $B = B_1\times\ldots\times B_n$, then $I_n(\ind{B})=\prod_{i=1}^n (\eta(B_i)-\lambda(B_i))$.
One also proves that $I_n(g)$ and $I_m(f)$ are uncorrelated for $n\neq m$. The subspace of $L_2(\Omega)$ consisting of all $m$-fold stochastic integrals of square integrable symmetric functions in $m$ variables is called the \emph{$m$-th Wiener-Poisson chaos}. The chaos representation property asserts that these spaces form an orthogonal decomposition of the space of square integrable random variables measurable with respect to $\eta$, which we will denote by $L_2(\eta)$ (see \eqref{eq:CR} below for an explicit formula).

\begin{proof}[Proof of Theorem \ref{thm:Poisson-ui}]
For the proof of Theorem \ref{thm:Poisson-ui} it will be convenient to assume that the measure $\lambda$ is non-atomic. We can do it without loss of generality, since we can replace $\mathcal{X}$ with $\mathcal{X}\times (0,1)$, $\lambda$ with $\lambda \otimes Leb$ (where $Leb$ is the Lebesgue measure on the interval) and $f_{n,k}$ by $f_{n,k}\circ \pi^k$, where $\pi^k$ is the natural projection from $(\mathcal{X}\times(0,1))^k$ onto $\mathcal{X}^k$. One can then check that the assumptions of Theorem \ref{thm:Poisson-ui} remain unchanged as well as the joint distribution of all the stochastic integrals involved.

Given a square $\lambda^{\otimes k}$-integrable symmetric function $f\colon \mathcal{X}^k \to \R$, by $\sigma$-finiteness of $\lambda$, we can approximate it in $L_2$ by a bounded symmetric function $g$ supported on a set $K^k$ with $\lambda(K) < \infty$. Then, using the well-known Darboux property for non-atomic measures, we can split $K$ into nested measurable partitions $\mathcal{A}_n = \{A_{n,1},\ldots,A_{n,2^n}\}$ of sets of measure $\lambda(K)/2^n$. Using the martingale convergence theorem we can approximate $g$ by functions constant on the sets $A_{n,i_1}\times \ldots\times A_{n,i_k}$. Since the total measure of sets of this form with $i_j = i_l$ for some $j\neq l$ is at most $\lambda(K)^k k(k-1)2^{-nk}2^{n(k-1)} = o(1)$ as $n \to \infty$, it follows that we can approximate $f$ in $L_2$ by functions of the form
\begin{align}\label{eq:tetrahedral-step-function}
  h = \sum_{i_1,\ldots,i_k = 1}^N a_{i_1,\ldots,i_k}\ind{A_{i_1}\times \ldots \times A_{i_k}},
\end{align}
where the sets $A_1,\ldots,A_N$ are pairwise disjoint subsets of $\mathcal{X}$ with $\lambda(A_i) < \infty$, the coefficients $a_{i_1,\ldots,i_k}$ are symmetric and vanish if $i_l = i_m$ for some $l\neq m$. Note also that if we have a finite family of functions of this form (perhaps with different $k$'s and $N$'s), we can always find their representations with the same sets $A_1,\ldots,A_N$ (first one enlarges the corresponding sequences of sets to have the same union, then one takes all possible intersections).

In the setting of Theorem \ref{thm:Poisson-ui}, we can thus find functions $g_{n,k} \in L_{2,s}(\mathcal{X}^k,\lambda^k)$, $k=1,\ldots,n$, such that as $n\to \infty$,
\begin{displaymath}
  \sum_{n=0}^\infty \sum_{k=1}^d \|I_n(f_{n,k}) - I_n(g_{n,k})\|_2 < \infty.
\end{displaymath}
Define $Z_n = \E F_n + \sum_{k=1}^d I_k(g_{n,k})$. It follows from the Borel-Cantelli lemma that for each $k$, $I_n(f_{n,k}) - I_n(g_{n,k})$ tends to zero almost surely. In particular $Z_n$ converges almost surely to $F_\infty$. Moreover,
\begin{displaymath}
\E \sup_{n \in \N} |Z_n| \le \E \sup_{n\in N} |F_n| + \E\sup_{n\in \N} |Z_n - F_n| \le \E X + \sum_{n=0}^\infty\sum_{k=1}^d \|I_n(f_{n,k}) - I_n(g_{n,k})\|_2 < \infty.
\end{displaymath}
Therefore it is enough to prove the almost sure convergence of $Z_{n,k} := I_n(g_{n,k})$. To this end we will closely follow the strategy used in the proof of Lemma \ref{le:reduction-to-d=1}.

Assume that $g_{n,k}$ is of the form
\begin{displaymath}
  g_{n,k} = \sum_{i_1,\ldots,i_k=1}^{N_{n}} a^{(n,k)}_{i_1,\ldots,i_k}\ind{A_{n,i_1}\times \ldots\times A_{n,i_k}},
\end{displaymath}
where the sets $A_{n,i}$ are pairwise disjoint and of finite measure $\lambda$ and coefficients $a^{(n,k)}_{i_1,\ldots,i_k}$ are symmetric and with vanishing diagonals (as explained before we can assume that the family of sets $A_{n,1},\ldots,A_{n,N_n}$ does not depend on $k$).
Note that
\begin{displaymath}
Z_{n,k} = I_k(g_{n,k}) = \sum_{i_1,\ldots,i_k=1}^{N_{n}} a^{(n,k)}_{i_1,\ldots,i_k}\prod_{j=1}^k(\eta(A_{n,i_j}) - \lambda(A_{n,i_j}))
\end{displaymath}
Let $\eta_1, \ldots,\eta_d$ be independent copies of the Poisson process $\eta$ and define the decoupled version of $Z_{n}$ with the formula
\begin{displaymath}
  Z_{n}^{dec} = \E F_n + \sum_{k=1}^d \frac{1}{\binom{d}{k}} \sum_{1\le r_1<\ldots< r_k\le d} \sum_{i_1,\ldots,i_k = 1}^{N_n} a^{(n,k)}_{i_1,\ldots,i_k}
  \prod_{j=1}^k(\eta_{r_j}(A_{n,i_j}) - \lambda(A_{n,i_j})).
\end{displaymath}

The almost sure convergence of $Z_n$ can be written as the following Cauchy type condition
\begin{displaymath}
  \lim_{n \to \infty} \sup_{m>n} \p(\sup_{n \le l \le m} |Z_n - Z_l| \ge \varepsilon) = 0
\end{displaymath}
for all $\varepsilon > 0$, while the majorization by an integrable random variable as
\begin{displaymath}
  \lim_{m\to \infty} \E \sup_{0\le l \le m} |Z_l| < \infty.
\end{displaymath}

Fix $m$ and recall that there exists $M$ and pairwise disjoint sets of finite measure $\lambda$, $B_1,\ldots,B_{M}$ together with symmetric coefficients $b^{(l,k)}_{i_1,\ldots,i_k}$, vanishing on diagonals, such that for every $l \le m$,
\begin{displaymath}
  g_{l,k} = \sum_{i_1,\ldots,i_k=1}^{M} b^{(l,k)}_{i_1,\ldots,i_k}\ind{B_{i_1}\times \ldots\times B_{i_k}},
\end{displaymath}
so that
\begin{displaymath}
  Z_{l,k} = \sum_{i_1,\ldots,i_k=1}^{M} b^{(l,k)}_{i_1,\ldots,i_k}\prod_{j=1}^k\Big(\eta(B_{i_j}) - \lambda(B_{i_j})\Big)
\end{displaymath}
(to simplify the notation we suppress the dependence of $M$ and the sets $B_i$ on $m$).

Thus, setting $X_i = \eta(B_i) - \lambda(B_i)$, we get for $l \le m$,
\begin{displaymath}
  Z_l = \sum_{1\le i_1\neq \ldots\neq i_d \le M} h^{(l)}_{i_1,\ldots,i_d} (X_{i_1},\ldots,X_{i_d}),
\end{displaymath}
where
\begin{displaymath}
  h^{(l)}_{i_1,\ldots,i_d}(x_1,\ldots,x_d) = \frac{(M-d)!}{M!}\E F_n + \sum_{k=1}^d \frac{(d-k)!}{d!}\frac{(M-d)!}{(M-k)!}\sum_{1\le r_1\neq \ldots\neq r_k \le d} b^{(l,k)}_{i_{r_1},\ldots,i_{r_k}}x_{r_1}\cdots x_{r_k}.
\end{displaymath}
Denote $X^{(j)}_i = \eta_j(B_i) - \lambda(B_i)$. Using the additivity of $\eta_j$ and $\lambda$, one can check that for $l \le m$,
\begin{displaymath}
  Z_{l}^{dec} = \sum_{1\le i_1\neq \ldots\neq i_d \le M} h^{(l)}_{i_1,\ldots,i_d} (X^{(1)}_{i_1},\ldots,X^{(d)}_{i_d})
\end{displaymath}
and hence applying the decoupling inequalities of Theorem \ref{thm:decoupling} to the spaces $\ell_\infty(\{n,n+1,\ldots,m\})$ and $\ell_\infty(\{0,1,\ldots,m\})$ and functions
\begin{align*}
  H_{i_1,\ldots,i_d}(x_1,\ldots,x_d) &= (h^{(l)}_{i_1,\ldots,i_d}(x_1,\ldots,x_d) - h^{(n)}_{i_1,\ldots,i_d}(x_1,\ldots,x_d))_{l=n}^m\\
  G_{i_1,\ldots,i_d}(x_1,\ldots,x_d) &= (h^{(l)}_{i_1,\ldots,i_d}(x_1,\ldots,x_d))_{l=0}^m
\end{align*}
respectively, we obtain
\begin{displaymath}
  \lim_{n \to \infty} \sup_{m>n} \p(\sup_{n \le l \le m} |Z^{dec}_n - Z^{dec}_l| \ge \varepsilon) = 0
\end{displaymath}
for all $\varepsilon > 0$, and
\begin{displaymath}
  \lim_{m\to \infty} \E \sup_{0\le l \le m} |Z^{dec}_l| < \infty,
\end{displaymath}
i.e., $Z^{dec}_n$ converges almost surely and is dominated by an integrable function. By Fubini Theorem, if we fix $s_1<\ldots< s_k \in [d]$, then with probability one $Z^{dec}_n$ converges almost surely with respect to $\{\eta_{i}\colon i \in [d]\setminus\{s_1,\ldots,s_k\}\}$ and is almost surely dominated by some integrable random variable. Thus with probability one it converges in $L_1(\eta_{i}\colon i \in [d]\setminus\{s_1,\ldots,s_k\})$, and in particular $\E(Z^{dec}_n|\eta_{s_1},\ldots,\eta_{s_k})$ converges almost surely
for every choice of $s_1,\ldots,s_k$. But
\begin{align}\label{eq:conditioned-decoupled-integral}
  \E(Z^{dec}_n|\eta_{s_1},\ldots,\eta_{s_k}) = \E F_n + \sum_{l=1}^k \frac{1}{\binom{d}{l}} \sum_{{1\le r_1<\ldots< r_l\le d}\atop{r_1,\ldots,r_l} \subset \{s_1,\ldots,s_k\}} \sum_{i_1,\ldots,i_l = 1}^{N_n} a^{(n,l)}_{i_1,\ldots,i_l}
  \prod_{j=1}^l(\eta_{r_j}(A_{n,i_j}) - \lambda(A_{n,i_j}))
\end{align}

From this, by induction one easily proves that $\E F_n$ is convergent and for any $1\le k \le d$, the sequence
\begin{displaymath}
  Z_{n,k}^{dec} = \sum_{i_1,\ldots,i_k = 1}^{N_n} a^{(n,k)}_{i_1,\ldots,i_k}
  \prod_{j=1}^k(\eta_{j}(A_{n,i_j}) - \lambda(A_{n,i_j}))
\end{displaymath}
converges almost surely.
Indeed, taking $\{s_1,\ldots,s_d\} = \emptyset$, we obtain convergence of $\E F_n$. Now assuming that $\E F_n$ converges and $Z_{n,l}^{dec}$ for $1 \le l < k$ converge almost surely,
by equidistribution of $\eta_i$ we obtain that for any $l < k$
\begin{displaymath}
\sum_{{1\le r_1<\ldots< r_l\le d}\atop{r_1,\ldots,r_l} \subset \{1,\ldots,k\}} \sum_{i_1,\ldots,i_l = 1}^{N_n} a^{(n,l)}_{i_1,\ldots,i_l}  \prod_{j=1}^l(\eta_{r_j}(A_{n,i_j}) - \lambda(A_{n,i_j}))
\end{displaymath}
converges almost surely, which combined with the almost sure convergence of $\E(Z^{dec}_n|\eta_1,\ldots,\eta_k)$ and \eqref{eq:conditioned-decoupled-integral} yields the almost sure convergence of $Z_{n,k}^{dec}$.

Now, using the decoupling inequalities in the opposite direction than before (we skip the definition of the corresponding functions $h$, which in this case is easier, since we deal with homogeneous polynomials), we obtain that the sequence $Z_{n,k}$ converges almost surely for each $k \le d$. By Lemma \ref{le:anti-triangle} we obtain that
\begin{displaymath}
  \E \sup_{n\in \N} |Z_{n,k}| \le C \E \sup_{n\in \N} |Z_n| < \infty,
\end{displaymath}
so we also have convergence in $L_1$ (note that Lemma \ref{le:anti-triangle} could be recovered from the above decoupling arguments, in fact this is the way it was proved in \cite{MR3052405}, but we prefer to rely on the abstract formulation, so as not to further complicate the above elementary but notationally unpleasant arguments).
We have thus established that the variables $Z_{n,k}$ converge almost surely and in $L_1$ to some random variables $F_{\infty,k}$ and (as explained at the beginning of the argument) it follows that the same convergence holds for $F_{n,k}$. In particular we have $F_\infty = \E F_\infty + \sum_{k=1}^d F_{\infty,k}$.

It remains to prove that if $(F_n)_{n=0}^\infty$ is bounded in $L_2$, then $F_{\infty,k}$ can be expressed as $k$-fold stochastic integral of a square integrable symmetric function. Note that by orthogonality, for each $k \le d$, $(F_{n,k})_{n=1}^\infty$ is bounded in $L_2$. Thus one can select a subsequence $(I_{k}(f_{n_l,k}))_{l=1}^\infty$, which converges weakly in $L_2$ to some random variable $\widetilde{F}_{\infty,k}$. Since the $k$-th chaos is a closed linear subspace of $L_2$, it follows that $\widetilde{F}_{\infty,k} = I_k(f_{\infty,k})$ for some $f_{\infty,k} \in L_{2,s}(\mathcal{X}^k,\lambda^{\otimes k})$. Moreover by the convergence of $F_{n,k}$ to $F_{\infty,k}$ in $L_1$, we obtain that for every measurable set $A$, $\E F_{\infty,k}\ind{A} = \lim_{n\to \infty} \E F_{n,k}\ind{A} = \E \widetilde{F}_{\infty,k}\ind{A}$, which shows that $F_{\infty,k} = \widetilde{F}_{\infty,k}$ almost surely and ends the proof of the theorem.
\end{proof}

\begin{remark}Let us note that variants of the above argument can be repeated to prove the almost sure convergence in more general situations, e.g., for square integrable random fields for which one defines multiple stochastic integrals by the $L_2$ theory, for tetrahedral polynomial chaos based on sequences of independent random variables (as investigated in the previous section) or for $U$-statistics, as in all these settings we can apply the general decoupling inequality in a similar manner.
\end{remark}

\begin{proof}[A sketch of an alternate proof of Theorem \ref{thm:Poisson-ui}]
The argument we will present is based on Mehler's formula for the Poisson process and is a direct counterpart of the proof of Theorem \ref{thm:Poly-Zheng-Gaussian} due to Poly and Zheng. We will not provide all the technical details, and refer to the monograph \cite{MR3791470} for the details. Recall that for any function $F$ on the set of integer valued measures on $\mathcal{X}$ (denote it by $N(\mathcal{X})$) and $\mu \in N(\mathcal{X})$ and any $x \in \mathcal{X}$ we define $D_x F(\mu) = D^1_x F(\mu)= F(\mu + \delta_x) - F(\mu)$ and inductively $D_{x_1,\ldots,x_n}F(\mu) = D_{x_1}^1D_{x_2,\ldots,x_{n}}^{n-1}F(\mu)$. We  also set $D^0 F = F$. For $F$ as above we also define the symmetric functions $T_n F\colon \mathcal{X}^n \to \R$ as $T_nF(x_1,\ldots,x_n) = \E D_{x_1,\ldots,x_n} F(\eta)$. For $F \in L_2(\eta)$ we then have the chaos representation, namely the equality
\begin{align}\label{eq:CR}
  F(\eta) = \sum_{n=0}^\infty \frac{1}{n!}I_n(T_n F),
\end{align}
with the series converging in $L_2(\eta)$. If $\eta$ is proper, i.e., it can be almost surely represented as a sum of Dirac's deltas, $\eta = \sum_{k=1}^\kappa \delta_{X_n}$ $(\kappa \le \infty$), we also define the $t$-trimming of $\eta$ ($t \in [0,1]$) as
\begin{displaymath}
  \eta_t = \sum_{k=1}^\kappa \Ind{U_n \le t} \delta_{X_n},
\end{displaymath}
where $U_1,U_2,\ldots$ are independent random variables distributed uniformly on $[0,1]$, independent of $\eta$.

Finally one defines a family of operators $P_t$, $t \in [0,1]$ on $L_2(\eta)$ with the formula
\begin{align}\label{eq:Pt-def}
  P_t f= \E f(\eta_t + \eta_{1-t}')|\eta),
\end{align}
where $\eta'_{1-t}$ is a Poisson process with intensity $(1-t)\lambda$, independent of the pair $(\eta,\eta_t)$. Mehler's formula (\cite[Chapter 20]{MR3520016}, \cite{MR3791470}) asserts then that for any $f \in L_2(\eta)$ and $t \in [0,1]$,
\begin{align}\label{eq:Mehler}
  D^n_{x_1,\ldots,x_n} (P_t F) = t^n P_t D_{x_1,\ldots,x_n}F,\; \textrm{and}\; \E D^n_{x_1,\ldots,x_n} (P_t F) = t^n \E D_{x_1,\ldots,x_n}F.
\end{align}

In the setting of Theorem \ref{thm:Poisson-ui}, we can assume without loss of generality that $\eta$ is proper (since we can always find  proper Poisson process with the same distribution as $\eta$, cf. \cite[Corollary 3.7]{MR3791470}).

We can assume that $\eta$ and $\eta'_{1-t}$ are defined on a product probability space $\Omega = \Omega_\eta\times \Omega_{\eta'}$ with measure $\p_\eta \otimes \p_{\eta'}$ and that they depend respectively only on the first and second coordinate.
Since $\widetilde{\eta} = \eta_t + \eta_{1-t}'$ has the same distribution as $\eta$, if we define $\widetilde{F}_n = \E F_n + \sum_{k=1}^d \widetilde{I}_k(f_{n,k})$, where $\widetilde{I}_k$ is the $k$-fold Wiener-It\^o integral with respect to $\widetilde{\eta}$, then $\widetilde{F}_n$ also converges in distribution and $\E \sup_n |\widetilde{F}_n|$ is integrable. Thus, by the Fubini theorem, it follows that $\p_\eta$-almost surely, the sequence $\widetilde{F}_n$  converges almost surely with respect to $\p_{\eta'}$ and is uniformly integrable. In particular, using the definition \eqref{eq:Pt-def} we obtain that $P_t F_n = \int \widetilde{F}_n d\p_{\eta'}$ converges almost surely as $t \to \infty$.

On the other hand \eqref{eq:Mehler} and the chaos representation property \eqref{eq:CR} imply that
\begin{displaymath}
  P_t F_n = \E F_n + \sum_{k=1}^d t^k I_k(f_{n,k}).
\end{displaymath}
Using the fact that the right-hand side above converges almost surely for sufficiently many $t \in [0,1]$, we obtain that $I_k(f_{n,k})$ converges almost surely for each $k \le d$.
\end{proof}

\appendix
\section{Decoupling and related inequalities}\label{app:decoupling}

In this section we gather basic facts concerning decoupling inequalities for $U$-statistics that are used throughout the article.

Let us start with the by now classical decoupling inequality due to de la Pe\~{n}a and Montgomery-Smith.

\begin{theorem}{\cite[Theorem 1]{MR1261237}}\label{thm:decoupling} Let $d$ be a positive integer and for $n \ge d$ let $(X_i)_{i=1}^n$ be a sequence of independent random variables with values in a measurable space $(S,\mathcal{S})$ and let $(X\ub{j}_i)_{i=1}^n$ $j= 1,\ldots,d$ be $d$ independent copies of this sequence. Let $E$ be a separable Banach space and for each $(i_1,\ldots,i_d) \in [n]^d$ with pairwise distinct coordinates let $h_{i_1,\ldots,i_d} \colon S^d \to E$ be a measurable function. There exists a numerical constant $C_d$, depending only on $d$ such that for all $t > 0$,
\begin{displaymath}
\p\Big(\Big\|\sum_{1\le i_1\neq \ldots\neq i_d\le n} h_{i_1,\ldots,i_d} (X_{i_1},\ldots,X_{i_d})\Big\|> t\Big)
\le C_d \p\Big(\Big\|\sum_{1\le i_1\neq \ldots\neq i_d\le n} h_{i_1,\ldots,i_d}(X\ub{1}_{i_1},\ldots,X\ub{d}_{i_d})\Big\|> t/C_d\Big).
\end{displaymath}
As a consequence for all $p \ge 1$,
\begin{displaymath}
\Big\|\sum_{1\le i_1\neq \ldots\neq i_d\le n} h_{i_1,\ldots,i_d}  (X_{i_1},\ldots,X_{i_d})\Big\|_p \le C_d' \Big\|\sum_{1\le i_1\neq \ldots\neq i_d\le n} h_{i_1,\ldots,i_d} (X\ub{1}_{i_1},\ldots,X\ub{d}_{i_d})\Big\|_p,
\end{displaymath}
where $C_d'$ is another numerical constant depending only on $d$.

If moreover the functions $h_{i_1,\ldots,i_d}$ are symmetric in the sense that, for all $x_1,\ldots,x_d \in S$ and all permutations $\pi\colon [d]\to [d]$,
$h_{i_1,\ldots,i_d}(x_1,\ldots,x_d) = h_{i_{\pi_1},\ldots,i_{\pi_d}}(x_{\pi_1},\ldots,x_{\pi_d})$,
then for all $t > 0$,
\begin{displaymath}
\p\Big(\Big\|\sum_{1\le i_1\neq \ldots\neq i_d\le n} h_{i_1,\ldots,i_d} (X\ub{1}_{i_1},\ldots,X\ub{d}_{i_d})\Big\|> t\Big)
\le \widetilde{C}_d \p\Big(\Big\|\sum_{1\le i_1\neq \ldots\neq i_d\le n} h_{i_1,\ldots,i_d} (X_{i_1},\ldots,X_{i_d})\Big\|> t/\widetilde{C}_d\Big)
\end{displaymath}
where $\widetilde{C}_d$ is a constant depending only on $d$.
As a consequence for some numerical constant $\widetilde{C}_d'$, depending only on $d$, and all $p \ge 1$,
\begin{displaymath}
\Big\|\sum_{1\le i_1\neq \ldots\neq i_d\le n} h_{i_1,\ldots,i_d} (X\ub{1}_{i_1},\ldots,X\ub{d}_{i_d})\Big\|_p
\le \widetilde{C}_d'\Big\|\sum_{1\le i_1\neq \ldots\neq i_d\le n} h_{i_1,\ldots,i_d} (X_{i_1},\ldots,X_{i_d})\Big\|_p.
\end{displaymath}

\end{theorem}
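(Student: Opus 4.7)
The strategy will be to establish the tail comparisons first and then deduce the $L_p$ inequalities by integrating $\p(\|S\|>t)\le C\p(\|T\|>t/C)$ against $p\,t^{p-1}\,dt$, which only costs a further factor $C^{1/p}\le C$ for $p\ge 1$. So the theorem reduces to two tail bounds, one for each direction, with constants depending only on $d$.

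For the direction from coupled to decoupled (no symmetry needed), I would use a randomised labelling trick. Introduce independent $[d]$-valued uniform random variables $\eta_1,\ldots,\eta_n$, independent of all the $X\ub{j}_i$, and set $Z_i := X_i\ub{\eta_i}$. Since $X\ub{1},\ldots,X\ub{d}$ are i.i.d.\ copies of $X$, the sequence $(Z_i)_i$ has the same joint distribution as $(X_i)_i$, hence the coupled sum $S$ has the same distribution as
\[
S^\star := \sum_{1\le i_1\neq\ldots\neq i_d\le n} h_{i_1,\ldots,i_d}\bigl(X_{i_1}\ub{\eta_{i_1}},\ldots,X_{i_d}\ub{\eta_{i_d}}\bigr).
\]
Splitting $S^\star$ according to the pattern $\phi=(\eta_{i_1},\ldots,\eta_{i_d})\in [d]^d$ assigned to each tuple, the contribution from bijective $\phi$ is, after relabelling the copies, a piece of the fully decoupled sum $T$, while the contributions from non-bijective $\phi$ are tetrahedral forms of strictly smaller degree. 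This suggests an induction on $d$: one conditions on $\eta$, uses a L\'evy--Ottaviani type maximal inequality to extract the full-rank contribution, and absorbs the lower-degree remainders via the inductive hypothesis.

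For the symmetric reverse direction, the symmetry of $h_{i_1,\ldots,i_d}$ implies that $T$ is invariant in distribution under simultaneous permutations of the copies $X\ub{1},\ldots,X\ub{d}$. Averaging the decomposition of $S^\star$ over all bijections $\phi\colon[d]\to[d]$ expresses $(d!/d^d)T$ as a conditional expectation, with respect to $\eta$, of a random variable distributed as $S$. Then Jensen's inequality applied to a convex increasing function yields the $L_p$ bound for $T$ in terms of $S$, and a companion maximal-type inequality gives the tail bound.

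The main obstacle will be the combinatorial bookkeeping required to keep all constants depending only on $d$, not on $n$ or on the particular functions $h_{i_1,\ldots,i_d}$. Each inductive step peels off a term of degree $d$, leaving a sum of tetrahedral forms of smaller degree, and the norms of these must be dominated uniformly in the random labels $\eta$; this is where the maximal-inequality step is essential, since one has to control suprema over the label configuration before integrating out. A secondary subtlety is that the tail bound cannot be extracted from the $L_p$ bound by Chebyshev's inequality (there is no a priori uniform $L_p$ control on the non-symmetric side), so the tail inequality must be proved directly at each level of the induction, which forces the use of L\'evy--Ottaviani-type symmetrisation throughout rather than only at the last step.
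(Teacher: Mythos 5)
This statement is not proved in the paper at all: it is quoted verbatim from de la Pe\~na and Montgomery-Smith \cite[Theorem 1]{MR1261237} and used as a black box, so there is no internal proof to compare against. Your proposal is therefore an attempt to reprove the cited theorem. The randomised-labelling device you start from ($Z_i = X_i^{(\eta_i)}$ with $\eta_i$ i.i.d.\ uniform on $[d]$, so that $(Z_i)_i$ is distributed as $(X_i)_i$) is indeed the engine of the known proofs (Kwapie\'n; de la Pe\~na--Montgomery-Smith), but the sketch has genuine gaps at exactly the points where the real work lies.

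First, the contributions of the non-bijective patterns $\phi\in[d]^d$ are \emph{not} tetrahedral forms of strictly smaller degree: each such term is still $h_{i_1,\ldots,i_d}$ evaluated at $d$ arguments, only drawn from fewer than $d$ independent copies. Your induction on the degree therefore does not close; what is actually needed is an induction on the number of distinct copies used (a ``partial decoupling'' statement, decoupling one coordinate at a time and iterating), and that requires its own argument. Second, and more seriously, neither direction of the \emph{tail} inequality follows from the conditional-expectation identities you invoke. Jensen yields $L_p$ comparisons from $\E(\,\cdot\mid\mathcal{G})$, but conditioning can radically sharpen tails, which is precisely why the tail version of decoupling was a nontrivial advance over the earlier $L_p$ version. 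The L\'evy--Ottaviani inequality does not fill this hole: the pattern decomposition of $S^\star$ is not a sum of terms that are independent in the labels $\eta$. The missing ingredient is the key lemma of \cite{MR1261237}, a Paley--Zygmund-type small-ball estimate showing that, conditionally on the $X^{(j)}$'s, the polynomial in the label indicators whose conditional mean is a fixed multiple of the target sum exceeds a constant fraction of that mean with probability at least $c_d>0$; only this converts identities in expectation into comparisons of tail probabilities. (A smaller inaccuracy: in the symmetric direction $\E_\eta S^\star$ is not $(d!/d^d)T$, since the non-bijective patterns contribute there as well and must be subtracted or absorbed.)
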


Another result used in our proofs is the following reverse triangle inequality for tetrahedral chaoses, obtained for the first time by Kwapie\'n \cite{MR893914} in the symmetric setting, which easily gives the general case (see also \cite{MR3052405}, where an alternate proof in the general case, based on Theorem \ref{thm:decoupling} is presented). We remark that this lemma can be also obtained by methods used by Poly and Zheng in their proof of Theorem \ref{thm:Poly-Zheng-independent}.

\begin{lemma}\label{le:anti-triangle}
For $j = 0,1,\ldots,d$ let $(a_{i_1,\ldots,i_j}^{j})_{1\le i_1,\ldots,i_j \le n}$ be a $k$-indexed symmetric array
of real numbers (or more generally elements of some normed space), such that $a_{i_1,\ldots,i_j}^{j} = 0$ if $i_k = i_l$ for some $1 \le k < l \le j$ (for $j=0$ we have just a single number $a_\emptyset^{{0}}$). Let $X_1,\ldots,X_n$ be independent mean zero random variables. Then there exists a constant $C_d \in (0,\infty)$, depending only on $d$, such that for all $p \ge 1$,
\begin{displaymath}
\sum_{j=0}^d \Big\|\sum_{i_1,\ldots,i_j=1}^n a_{i_1,\ldots,i_j}^{j}X_{i_1}\cdots X_{i_j}\Big\|_p \le
C_d \Big\|\sum_{j=0}^d\sum_{i_1,\ldots,i_j=1}^n a_{i_1,\ldots,i_j}^{j}X_{i_1}\cdots X_{i_j}\Big\|_p.
\end{displaymath}
\end{lemma}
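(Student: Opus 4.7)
The plan is to proceed by induction on $d$, combining the decoupling inequalities of Theorem \ref{thm:decoupling} with an inclusion-exclusion over conditional expectations to isolate the top-degree piece $W_d := \sum_{i_1,\ldots,i_d} a^{d}_{i_1,\ldots,i_d} X_{i_1}\cdots X_{i_d}$. The case $d = 0$ is trivial, so assume the statement for $d-1$ and set $W := \sum_{j=0}^{d} W_j$, where $W_j := \sum_{i_1,\ldots,i_j} a^{j}_{i_1,\ldots,i_j} X_{i_1}\cdots X_{i_j}$.

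Following the encoding used in the proof of Lemma \ref{le:reduction-to-d=1}, $W$ can be written as $\sum_{1\le i_1 \ne \ldots \ne i_d \le n} h_{i_1,\ldots,i_d}(X_{i_1},\ldots,X_{i_d})$ for a family of symmetric multilinear kernels $h$, and applying both directions of Theorem \ref{thm:decoupling} then yields a two-sided comparison $\|W\|_p \asymp \|\widehat W\|_p$, with constants depending only on $d$, where
\begin{displaymath}
\widehat W = \sum_{k=0}^d \frac{1}{\binom{d}{k}} \sum_{1 \le r_1 < \ldots < r_k \le d} \sum_{i_1,\ldots,i_k} a^{k}_{i_1,\ldots,i_k} X^{(r_1)}_{i_1}\cdots X^{(r_k)}_{i_k}
\end{displaymath}
is the decoupled counterpart built from $d$ independent copies $(X^{(r)}_i)$ of the sequence $(X_i)$.

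The key step is to extract $W_d$ from $\widehat W$ by inclusion-exclusion. Since $\E X^{(r)}_i = 0$ and the copies are jointly independent, for each $S \subseteq [d]$ the conditional expectation $\E[\widehat W \mid (X^{(r)})_{r\in S}]$ equals the sum of those summands of $\widehat W$ whose active set $\{r_1,\ldots,r_k\}$ is contained in $S$. Using the identity $\sum_{S \supseteq R}(-1)^{d-|S|} = \ind{R = [d]}$, one obtains
\begin{displaymath}
\sum_{S\subseteq[d]} (-1)^{d-|S|}\E[\widehat W \mid (X^{(r)})_{r\in S}] = \sum_{i_1,\ldots,i_d} a^{d}_{i_1,\ldots,i_d} X^{(1)}_{i_1}\cdots X^{(d)}_{i_d} =: \widehat W_d.
\end{displaymath}
The conditional Jensen inequality bounds the left-hand side in $L_p$ by $2^d\|\widehat W\|_p$, while Theorem \ref{thm:decoupling} applied to the homogeneous symmetric tetrahedral kernel $(x_1,\ldots,x_d)\mapsto a^d_{i_1,\ldots,i_d}x_1\cdots x_d$ gives $\|W_d\|_p \le C_d\|\widehat W_d\|_p$; chaining these estimates produces $\|W_d\|_p \le C'_d\|W\|_p$ for some $C'_d$ depending only on $d$.

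Finally, the triangle inequality yields $\|\sum_{k=0}^{d-1} W_k\|_p \le (1+C'_d)\|W\|_p$, and since this is again a sum of tetrahedral homogeneous forms of degrees at most $d-1$ in the variables $X_i$, the inductive hypothesis produces the desired control on $\sum_{k=0}^{d-1}\|W_k\|_p$. The main obstacle I anticipate is the bookkeeping needed to ensure that the initial decoupling of $W$ uses a \emph{symmetric} kernel $h$, so that both directions of Theorem \ref{thm:decoupling} are available and one actually obtains the two-sided comparison $\|W\|_p \asymp \|\widehat W\|_p$; this is precisely why one must encode $W$ as a single $d$-indexed $U$-statistic rather than decouple its summands $W_k$ one degree at a time.
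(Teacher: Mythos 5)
Your argument is correct, and it is essentially the proof the paper points to rather than writes out: Lemma~\ref{le:anti-triangle} is only cited there (to Kwapie\'n and to the decoupling-based proof in \cite{MR3052405}), and your reconstruction via the symmetric kernel encoding of Lemma~\ref{le:reduction-to-d=1}, two-sided decoupling, and inclusion--exclusion over conditional expectations is exactly that route. Two small remarks, neither of which is a gap: for the chain $\|W_d\|_p \le C\|\widehat W_d\|_p \le C2^d\|\widehat W\|_p \le C'\|W\|_p$ you only need the symmetric (reverse) direction of Theorem~\ref{thm:decoupling} at the last step and the unrestricted direction at the first, so the full two-sided comparison is not required; and the conditional expectations are legitimate because, when the right-hand side of the lemma is finite, the tail-level decoupling inequality already forces $\widehat W\in L_p\subseteq L_1$ (when it is infinite the claim is vacuous). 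The mean-zero hypothesis enters only in the identity $\E\bigl[\widehat W \mid (X^{(r)})_{r\in S}\bigr]=\sum_{\{r_1,\ldots,r_k\}\subseteq S}(\cdots)$, which is where your use of it is correctly localized.
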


\section{Proof of Proposition \ref{prop:convergence-to-one}}\label{app:LLN}
We will now prove the characterization of the convergence in probability to one, given in Proposition \ref{prop:convergence-to-one}.
\begin{proof}
  Assume first that conditions (i), (ii) are satisfied. By (ii) we get that $\p(\max_{i\le k_n} |X_{n,i}| > \tau) \to 0$ and as a consequence $\sum_{k=0}^{k_n} X_{k,n}\Ind{|X_{n,k}| > \tau}$ converges in probability to zero. On the other hand, by (i), (ii) and Chebyshev's inequality, $\sum_{k=0}^{k_n} X_{n,k}\Ind{|X_{n,k}|\le \tau}$ converges in probability to one, which ends the proof of the first implication (note that we did not use the asymptotic smallness condition \eqref{eq:asymptotic-smallness}).

  Assume now that $\sum_{k=0}^{k_n} X_{n,k}$ converges in probability to one. Denote $\X_n = (X_{n,k})_{k=0}^{k_n}$ and let $\X' = (X_{n,k}')_{k=0}^{k_n}$ be an independent copy of $\X$. We have $\sum_{k=0}^{k_n}(X_{n,k} - X_{n,k}') \to 0$ in probability. Since $X_{n,k} - X_{n,k}'$ is symmetric we also have $S_n := \sum_{k=0}^{k_n} \varepsilon_k |X_{n,k} - X_{n,k}'|$ where $\varepsilon_k, k \in \N$ are i.i.d. Rademacher variables independent of $(X_{n,k}), (X_{nk}')$. Consider the event
  \begin{displaymath}
  A_n= \{\max_{k \le k_n} |X_{n,k}| > \tau\} = \bigcup_{k=0}^{k_n} A_{n,k},
  \end{displaymath}
  where $A_{n,k} = \{\forall_{0\le i< k} |X_{i,n}| \le \tau, |X_{n,k}| > \tau\}$.
   Note that by independence and \eqref{eq:asymptotic-smallness} for large $n$, on $A_{n,k}$, $\p(|X_{n,k} - X_{n,k}'|\ge \tau/2|\X) \ge 1/2$ . Moreover, by symmetry of the Rademacher variables $\p(|S_n| \ge |X_{n,k} - X_{n,k}'||\X,\X') \ge 1/2$. Therefore we get
   \begin{displaymath}
     \p(|S_n| \ge \tau/2) \ge \sum_{k=0}^{k_n} \p(\{|S_n| \ge \tau/2\}\cap A_{n,k}) \ge \frac{1}{4}\sum_{k=0}^{k_n} \p(A_{n,k}) = \p(A_n)/4.
   \end{displaymath}
   As a consequence $\p(A_n) \to 0$ as $n \to \infty$.
  A standard estimate
  \begin{displaymath}
  \p(A_n) \ge \frac{1}{2}\min\Big(\sum_{k=0}^{k_n} \p(|X_{n,k}|> \tau),1\Big)
  \end{displaymath}
    shows that
   \begin{align}\label{eq:sum-of-rpobabilities}
    \sum_{k=0}^{k_n} \p(|X_{n,k}| > \tau) \to 0
    \end{align}
    as $n \to \infty$.

   Define now $Z_{n,k} = (X_{n,k}\Ind{|X_{n,k}|\le \tau} - X_{n,k}'\Ind{|X_{n,k}'|\le \tau})$ and $\widetilde{S}_n = \sum_{k=0}^{n} Z_{n,k}$. We have $\widetilde{S}_{n} \to 0$ in probability.
Moreover, $\E Z_{n,k} = 0$ and so by independence,
  \begin{displaymath}
    \E \widetilde{S}_n^ 4 = \sum_{k=0}^{k_n} \E Z_{n,k}^4 + 3\E \sum_{1\le i \neq j \le k_n} \E Z_{n,i}^2 \E Z_{n,j}^2 \le 4 \tau^2 \E \widetilde{S}_n^2 + 3(\E \widetilde{S}_n^2)^2.
    \end{displaymath}
   By the Paley-Zygmund inequality (see, e.g., \cite[Corollary 3.3.2]{MR1666908}),
   \begin{displaymath}
     \p\Big(|\widetilde{S}_n| \ge \frac{1}{2}(\E \widetilde{S}_n^2)^{1/2}\Big) \ge \frac{9}{16} \frac{(\E \widetilde{S}_n^2)^2}{\E \widetilde{S}_n^4}
     \ge \frac{9}{16}\frac{(\E \widetilde{S}_n^2)^2}{4 \tau^2 \E \widetilde{S}_n^2 + 3(\E \widetilde{S}_n^2)^2}.
   \end{displaymath}
   This shows that $\E \widetilde{S}_n^2 \to 0$ as $n \to \infty$ (since otherwise the right hand side above would be separated from zero along a subsequence). But
   $\E \widetilde{S}_n^2 = 2 \sum_{k=0}^{k_n} \Var(X_{n,k}\Ind{|X_{n,k}|\le \tau})$ which together with \eqref{eq:sum-of-rpobabilities} proves (ii).
   The convergence asserted in (i) is now an immediate consequence of (ii) and the convergence $\sum_{k=0}^{k_n} X_{n,k}\Ind{|X_{n,k}|\le \tau} \to 1$ in probability.
\end{proof}
\bibliographystyle{amsplain}
\bibliography{convergenceChaoses}

\end{document}